\newcommand{\N}{\mathbb{N}}
\newcommand{\R}{\mathbb{R}}
\newcommand{\bigO}{\mathcal{O}}
\newcommand{\bx}{\mathbf{x}}
\newcommand{\hatbx}{\hat{\bx}}
\newcommand{\bp}{\mathbf{p}}
\newcommand{\hatbp}{\hat{\bp}}
\newcommand{\vmin}{v_{\rm{min}}}
\newcommand{\vmax}{v_{\rm{max}}}
\newcommand{\grad}{\nabla}
\newcommand{\mesh}{\mathcal{T}_h}
\newcommand{\mrhyde}{{MrHyDE}}
\newcommand{\dealii}{{deal.II}}
\newcommand{\intrepid}{{Intrepid}}
\newcommand{\mfem}{{MFEM}}
\newtheorem{remark}{Remark}
\newcommand{\REMOVE}[1]{}
\begin{document}
\setcounter{tocdepth}{1}

\title{R-Adaptive Mesh Optimization to Enhance Finite Element Basis Compression}
\author{
  Graham Harper\thanks{Sandia National Laboratories, Computational Mathematics Department,({gbharpe@sandia.gov})}
\and
  Denis Ridzal\thanks{Sandia National Laboratories, Optimization and Uncertainty Quantification Department, ({dridzal@sandia.gov})}
\and
  Tim Wildey\thanks{Sandia National Laboratories, Computational Mathematics Department, ({tmwilde@sandia.gov}).  This paper describes objective technical results and analysis. Any subjective views or opinions that might be expressed in the paper do not necessarily represent the views of the U.S. Department of Energy or the United States Government. Sandia National Laboratories is a multimission laboratory managed and operated by National Technology and Engineering Solutions of Sandia, LLC., a wholly owned subsidiary of Honeywell International, Inc., for the U.S. Department of Energy's National Nuclear Security Administration under contract DE-NA-0003525.}
}

\maketitle

\begin{abstract}
Modern computing systems are capable of exascale calculations, which are revolutionizing the development and application of high-fidelity numerical models in computational science and engineering. 
While these systems continue to grow in processing power, the available system memory has not increased commensurately,
and electrical power consumption continues to grow. 
A predominant approach to limit the memory usage in large-scale applications is to exploit the abundant processing power and continually recompute many low-level simulation quantities, rather than storing them.
However, this approach can adversely impact the throughput of the simulation and diminish the benefits of modern computing architectures.
We present two novel contributions to reduce the memory burden while maintaining performance in simulations based on finite element discretizations.
The first contribution develops dictionary-based data compression schemes that detect and exploit the structure of the discretization, due to redundancies across the finite element mesh.
These schemes are shown to reduce memory requirements by more than 99\% on meshes with large numbers of nearly identical mesh cells.
For applications where this structure does not exist, our second contribution leverages a recently developed augmented Lagrangian sequential quadratic programming algorithm to enable r-adaptive mesh optimization, with the goal of enhancing redundancies in the mesh.
Numerical results demonstrate the effectiveness of the proposed methods to detect, exploit and enhance mesh structure on examples inspired by large-scale applications.

\end{abstract}

\section{Introduction}
\label{sec:introduction}

Modern computing technology has embraced heterogeneous computational architectures with processing units that enable massive concurrency for scientific computing applications.
While these high-performance computing systems, led by Frontier at Oak Ridge National Lab and Aurora at Argonne National Lab, are demonstrating exascale calculations~\cite{top500_6.24}, many applications are limited in fully exploiting the technology due to significant reductions in the amount of system memory per compute thread.
Efforts to compress data output from scientific applications, through dimension reduction and machine learning, do not reduce the amount of system memory required to run the simulations.
New approaches are required to enable higher-fidelity simulations for predictive science while effectively utilizing leadership-class computing systems.
Many scientific applications involving the numerical solution of partial differential equations (PDEs) using finite element methods (FEMs) are limited in their prediction fidelity by system memory rather than processing power.
The shift toward library-based and modular high-performance computing (HPC) applications, through FEM software libraries like \mfem{}~\cite{anderson2021mfem}, \dealii{}~\cite{bangerth2007deal} and \intrepid{}~\cite{bochev2012solving},
presents a unique and far-reaching opportunity for the community to tackle the memory challenge at its source, namely, the data structures used for finite element computations, with few if any changes to the implementations of the PDEs that underlie HPC applications.
Several efforts have been initiated, and in some cases renewed, to address the computational performance challenges associated with the storage of finite element data structures.
Matrix-free approaches, such as Jacobian-free Newton Krylov methods~\cite{knoll2004jacobian} and partial assembly~\cite{anderson2021mfem}, seek to avoid the memory required to store the Jacobians of PDE residuals and utilize only matrix-vector products for linear algebra operations.
By reducing the data associated with linear algebra operations, larger problems can be solved, but constructing effective matrix-free preconditioners is challenging and remains an active area of research~\cite{may2015scalable,clevenger2021comparison}.
Other data reduction techniques such as sum factorizations~\cite{ainsworth2011bernstein,bressan2019sum,mora2019sumfactorization}
seek to exploit structure to reduce the cost in storing or computing the finite element basis functions, and are most beneficial to three-dimensional problems with high-order function approximations.
Elsewhere, data reduction techniques have been used to process vast amounts of data generated by PDE simulations, thereby enabling, e.g., PDE-constrained optimization~\cite{griewank1992achieving,alshehri2024inexact,muthukumar2021randomized} and adjoint-based \textit{a-posteriori} error estimation~\cite{cyr2015towards} for evolutionary PDEs, where the goal is to reduce the cost due to storing the full state trajectory for adjoint simulations.

Complementary to the aforementioned data reduction techniques, in this paper we propose to directly exploit redundancies in the \emph{computational mesh}.
Additionally, we propose to \emph{enhance} mesh redundancies, through a movement of mesh nodes without changes to the mesh topology, also known as adaptive mesh redistribution, r-adaptivity or moving mesh methods~\cite{tang2005moving,budd2009adaptivity,browne2014fast}.
An overarching goal is to enable clustering of mesh cells into a relatively small number of classes based on shape identity or, more broadly, shape similarity.
Some related ideas have been explored in the context of mesh generation and mesh refinement~\cite{suarez2021computing,trujillo2024finite}, i.e., without the challenge of preserving the topology of a \emph{given} mesh.

The cost of storing finite element quantities, such as the evaluations of finite element basis functions or the Jacobians of mesh cell transformations, scales linearly with the number of elements in the mesh, $N$.
Our contributions aim to reduce the $\bigO(N)$ storage cost to $\bigO(m)$, where $m \ll N$, based on the redundancies in the shapes of the mesh cells.
The first contribution is a dictionary-based data compression scheme for problems where a given mesh contains this redundant structure.  
The scheme seeks to compress the evaluations of finite element basis functions at quadrature points, which are linked directly to the geometries of the mesh cells.
The dictionary achieves reductions in storage of more than 99\% for several types of meshes with redundant structure, enabling billion-element simulations in under 100~gigabytes of memory.
The second contribution is a new r-adaptive mesh optimization scheme to enhance the mesh redundancy and, therefore, compression of finite element quantities.
Our scheme combines a novel adaptation of $k$-medoids clustering~\cite{park2009simple} with a sparsity-promoting optimization formulation geared toward lossless compression.
The latter is inspired by the constrained optimization alternative~\cite{ridzal2016meshcorrection} to r-adaptive schemes based on solving the Monge-Amp\`{e}re equation~\cite{cossette2014monge,browne2014fast}.
The proposed inequality constraints on the cell volumes, which prevent tangling and ensure relatively minor deviations from the original mesh, are handled using a fast and scalable augmented Lagrangian method, \cite{antil2023alesqp}.
The overall scheme yields excellent lossless compression for several challenging meshes and, owing to~\cite{antil2023alesqp}, exhibits scalable performance as the mesh size increases.

The remainder of this paper is organized as follows.
In Section~\ref{sec:FEM}, we provide some background on finite element methods, sufficient to motivate in-situ data compression.
In Section~\ref{sec:compression}, we present our dictionary-based compression scheme, and apply it to finite element meshes and discretizations with readily exploitable redundancy.
In Section~\ref{sec:ProblemFormulation}, we introduce our r-adaptive mesh optimization formulation, to enhance redundancy for more general meshes.
The algorithms, including our adaptation of $k$-medoids clustering and the scalable optimization method, are described in Section~\ref{sec:Algorithms}.
We present numerical results in Section~\ref{sec:Results} and provide concluding remarks in Section~\ref{sec:Conclusion}.

\section{Background on finite element methods}
\label{sec:FEM}

A key computational kernel in finite element methods is given by the integrals of the form
\begin{align}
  \label{eq:productintegral}
  & \int_T k(\mathbf{x})
  D^{\alpha_1} \phi_i(\bx)
  D^{\alpha_2} \phi_j(\bx)
  \ d\mathbf{x},
\end{align}
where $T$ is a domain that corresponds to a mesh cell
in a mesh $\mathcal{T}_h$ of the problem domain $\Omega$,
$k(\bx)$ is some (not necessarily scalar) problem coefficient,
$\alpha_1$ and $\alpha_2$ are alpha-indices for the derivative operator $D$,
and $\phi_i(\bx)$ and $\phi_j(\bx)$ are finite element basis functions.
The most common approach to evaluate these integrals 
defines a quadrature and a set of basis functions
on a reference domain $\hat{T}$, which is typically some subset of points
that lie in the hypercube $[-1,1]^d$ or $[0,1]^d$.
$\hat{T}$ is then related to $T$ by the reference to physical mapping $\Phi_T:\hat{T}\to T$,
as shown in Figure \ref{fig: reference to physical},
where $\bx$ refers to physical coordinates on $T$
and $\hatbx$ refers to reference coordinates on~$\hat{T}$.
For a function $f:T\to\R$, we will use the hat notation
$\hat{f}:\hat{T}\to\R$ to define the pullback
$\hat{f}(\hatbx) := f(\Phi_T(\hatbx))$.
Conversely, one defines $f$ as the pushforward of~$\hat{f}$
by~$f(\bx) = \hat{f}(\Phi_T^{-1}(\bx))$.
We may omit the use of the $T$ subscript
when the domain is understood by context.
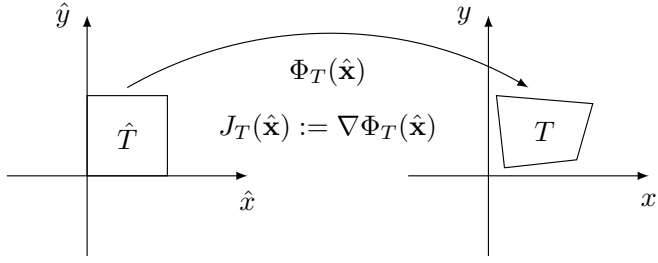
\begin{figure}
  \centering
  \resizebox{!}{3.5cm}{
  \begin{tikzpicture}
  \begin{scope}
  \draw[-latex] (-1,0) -- (2,0); %
  \node at (2,-0.3) {$\hat{x}$}; %
  \draw[-latex] (0,-1) -- (0,2); %
  \node at (-0.3,2) {$\hat{y}$}; %
  \draw (0,0) -- (1,0) -- (1,1) -- (0,1) -- (0,0);
  \node at (0.5,0.5) {$\hat{T}$};
  \end{scope}
  \begin{scope}[xshift=5cm]
  \draw[-latex] (-1,0) -- (2,0); %
  \node at (2,-0.3) {$x$}; %
  \draw[-latex] (0,-1) -- (0,2); %
  \node at (-0.3,2) {$y$}; %
  \draw (0.2,0.1) -- (1.1,0.2) -- (1.3,0.9) -- (0.1,1.0) -- (0.2,0.1);
  \node at (0.7,0.55) {$T$};
  \end{scope}
  \draw[-latex] (0.5,1.1) arc (120:60:5);
  \node at (3,1.3) {$\Phi_T(\hatbx)$};
  \node at (3,0.6) {$J_T(\hatbx):=\nabla\Phi_T(\hatbx)$};
  \end{tikzpicture}}
\caption{Illustration of the reference to physical mapping $\Phi_T$ for a mesh cell $T$ and reference cell $\hat{T}$.}
\label{fig: reference to physical}
\end{figure}

This approach of calculating finite element basis data is the standard approach for many high-performance FEM libraries, such as \mfem{}~\cite{anderson2021mfem}, \dealii{}~\cite{bangerth2007deal} or \intrepid{}~\cite{bochev2012solving}, as it requires only one set of quadrature points and basis evaluations on $\hat{T}$, instead of new quadrature points and bases on each $T$.
However, as this approach requires a change of coordinates,
integrals must be handled more carefully.
For a single scalar-valued function $f(\bx)$,
its integral is computed by the change of coordinates formula
(c.f. \S 2.1.3 in \cite{boffi2013mixed})
\begin{align*}
  \int_T f(\bx) \ d\bx
  &=
  \int_{\hat{T}} \hat{f}(\hatbx) \mbox{det}J_T(\hatbx) \ d\hatbx,
\end{align*}
where $J_T(\hatbx) := \grad \Phi_T(\hatbx)$ is the tensor-valued Jacobian
of the reference to physical mapping.
Integration of $\grad f(\bx)$ involves transforming a vector field to the reference domain,
which requires the application of the covariant transformation (c.f. \S 2.1.3,2.1.4 in \cite{boffi2013mixed})
\begin{align*}
  \int_T \grad f(\bx) \ d\bx
  &=
  \int_{\hat{T}} J^{-\mathsf{T}}_T(\hatbx) \grad \hat{f}(\hatbx) \mbox{det}J_T(\hatbx) \ d\hatbx.
\end{align*}
We will refer to the quantity $J^{-\mathsf{T}}_T \grad \hat{f}(\hatbx)$ on $\hat{T}$ as the
\textit{physical gradient} of $f$ on $T$.
This is easier to understand from the perspective of differential geometry;
here we provide only the necessary information and refer the interested reader to \cite{boffi2013mixed,mora2019sumfactorization}.
The two changes of coordinates presented above are special cases of transformations required to evaluate finite element basis functions, depending on the choice of the discrete space within the so-called de Rham complex.
In $\R^3$, the de Rham complex with the associated spaces and transforms is
\begin{align}
\begin{array}{ccccccc}
H^1(\Omega)
&\xrightarrow{\nabla}& H(\mbox{curl}; \Omega)
&\xrightarrow{\nabla\times}& H(\mbox{div}; \Omega)
&\xrightarrow{\nabla\cdot}& L^2(\Omega)
\\
I(\hatbx)
&&J^{-\mathsf{T}}(\hatbx)
&&\mbox{det}(J(\hatbx))^{-1} J(\hatbx)
&&\mbox{det}(J(\hatbx))^{-1},
\end{array}
\end{align}
where the first row presents the formal Sobolev space names, and the second row presents the pullback transformations needed for function evaluations.
Relating this back to the example shows for $f \in H^1(\Omega)$
the necessary pullback is the identity,
which agrees with the change of coordinates formula for scalar-valued functions.
However, $\grad f \in H(\mbox{curl}; \Omega)$,
meaning the corresponding pullback is $J^{-\mathsf{T}}(\bx)$.
Finally, coming back to~\eqref{eq:productintegral}, computing a so-called  stiffness matrix requires pullbacks of both the test and trial functions,
\begin{align}
  \int_T \grad \phi_i(\bx) \cdot \grad \phi_j(\bx) \ d\bx
  &=
  \int_{\hat{T}} (J^{-\mathsf{T}}_T(\hatbx) \grad \hat{\phi}_i(\hatbx)) \cdot (J^{-\mathsf{T}}_T(\hatbx) \grad \hat{\phi}_j(\hatbx))
  \mbox{det}J_T(\hatbx) \ d\hatbx.
\end{align}
As each of these transforms depend on $J_T(\mathbf{x})$,
two cells $T_1$ and $T_2$ with equal reference to physical
Jacobians $J_{T_1}$ and $J_{T_2}$ will have equal pullbacks
for finite element basis functions.
This observation is valuable in developing a dictionary-based compression scheme for redundant finite element data.

\subsection{Trade-offs between computational complexity and storage cost}
Before presenting the compression scheme, we additionally motivate the dictionary approach by considering the computational complexity and storage cost for finite element basis evaluations.
We briefly discuss alternatives, while focusing on a $d$-dimensional quadrilateral or hexahedral mesh, $d=2,3$, with $N$ cells, a degree-$p$ polynomial basis, and $q$ quadrature points in each dimension.

The alternative to storing basis data with respect to a reference domain
requires storing physical basis functions and gradients on each cell.
Full storage of basis data on the physical cell has the cost
\begin{align}
  \label{eq:storagefull}
  \bigO(N (p+1)^d q^d d).
\end{align}
For simplicial geometry, it is straightforward to define
FEM basis functions on the physical cell by utilizing barycentric coordinates.
This is more difficult for general non-affine quadrilateral and hexagonal geometries,
as the pullback map is nonlinear.
In the simplest case, such basis evaluations cost $\bigO((p+1)d^2)$
floating point operations (flops),
resulting in $\bigO(N (p+1)^{d+1} q^d d^3)$ computational complexity.

However, if only one set of basis data is stored on the reference element
and the transformation Jacobians are stored as well, the storage cost is
\begin{align}
  \bigO((p+1)^d q^d d + N q^d d^2),
\end{align}
where the first term corresponding to reference data is negligible
for any large-scale mesh.
This storage cost is approximately the same as the previous for $p=1$,
but it provides significant savings for $p>1$.
To study the computational complexity of this approach, we note that each reference basis function evaluation costs $\bigO((p+1)d^2)$ flops, i.e., the total reference basis evaluation costs $\bigO((p+1)^{d+1}q^d d^3)$ flops.
However, to transform these reference values to the physical space using the covariant or Piola transforms costs $\bigO(d(2d-1))$ flops for each matrix-vector multiplication.
This matrix-vector multiplication must be performed for every cell, basis function, and quadrature point,
bringing the final computational cost to $\bigO(N (p+1)^d q^d d^2)$ flops.
As the most expensive storage cost here is in the Jacobians,
they could also be evaluated on the fly, but doing so adds
another $\bigO(N d^2 q^d)$ flops.

Finally, one could compute basis data on the reference cell
and then store the basis functions with the appropriate
transformations applied.
The full storage of physical values with respect to reference cells has
\begin{align}
  \bigO(N (p+1)^d q^d d)
\end{align}
cost, which is the same as the first approach.
However, by combining this approach with reference data and
Jacobian-based transformations, the computational complexity is $\bigO(N (p+1)^{d+1} q^d d^2)$ flops instead of $\bigO(N (p+1)^{d+1} q^d d^3)$.
In each of the cases involving storage of data on the reference cell,
there is a tradeoff between storage cost and compute time.
Figure~\ref{fig: partial storage} highlights one example of how such tradeoffs look for a time-dependent simulation on a fixed mesh.
\begin{figure}
  \centering
  \includegraphics[height=0.4\textwidth]{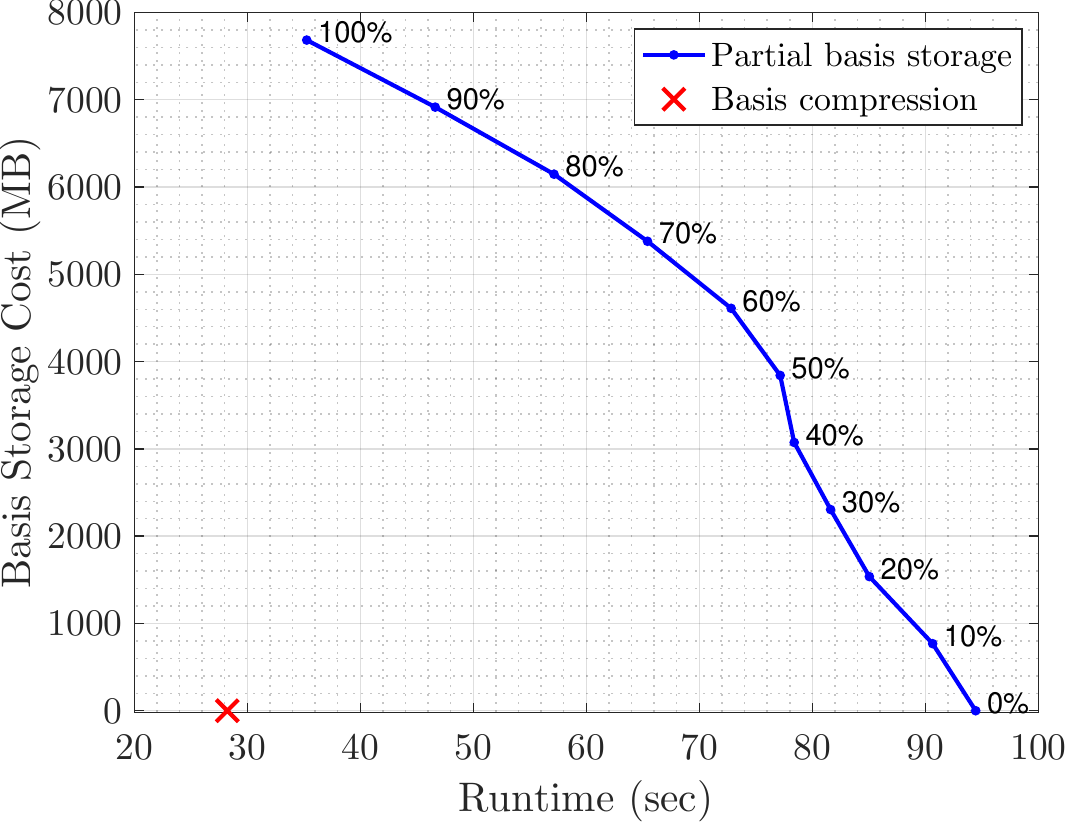}
  \caption{Tradeoffs between storing and recomputing basis data for a heat transfer simulation, where we show the percentage of basis data retained across the mesh.  For the given mesh, containing significant redundancy, the cost of our proposed basis compression scheme is denoted by the $\textcolor{red}{\mathsf{x}}$ mark.}
  \label{fig: partial storage}
\end{figure}
In each of the calculations, with the exception of the 0\% storage scenario and the proposed compression scheme, the number of mesh cells $N$ dominates the storage cost.
At the same time, reducing storage significantly increases runtime.
To alleviate this tradeoff, the proposed compression scheme effectively reduces computations from $N$ mesh cells to $m$ mesh cells, where $m\ll N$, resulting in the storage complexity of
\begin{align}
  \bigO(m (p+1)^d q^d d) ,
\end{align}
while maintaining, and in the case of Figure~\ref{fig: partial storage} improving, runtime performance.

We will explain the mechanics of our compression scheme in the following section.
We note that the reduction of data from $N$ cells to $m$ cells is performed on a cell-by-cell basis.
Therefore, this approach is complementary to other modern data reduction techniques such as sum factorizations.
Typically, sum factorizations reduce the computational complexity by removing the spatial dimension from the exponent, resulting in complexity
\begin{align}
  \bigO(N d^2 (p+1) q).
\end{align}
Additionally exploiting mesh redundancy would reduce the computational complexity of sum factorizations to
\begin{align}
  \bigO(m d^2 (p+1) q).
\end{align}

\section{Exploiting mesh redundancy to compress finite element data}
\label{sec:compression}

This section develops a dictionary-based compression scheme to detect and exploit mesh structure and reduce storage of finite element data.
From the previous section, two mesh cells $T_1$ and $T_2$ with equal Jacobians $J_{T_1} = J_{T_2}$ must have mappings $\Phi_{T_1}$ and $\Phi_{T_2}$ which differ by only a constant.
That is, $T_1$ is a translation of $T_2$ if and only if $J_{T_1} = J_{T_2}$.
In this case, we say the domains $T_1$ and $T_2$ have identical \textit{shapes}\footnote{We discuss cell orientations in Section~\ref{sec:ProblemFormulation}.}.
The following lemma relates cell shapes to finite element operator evaluations.
\begin{lemma}
\label{lem:shapes}
Let $\hat{\phi} \in \mathcal{S}$, with $\mathcal{S} \in \{ H^1(\hat{T}), H(\mathrm{curl};\hat{T}), H(\mathrm{div};\hat{T}),L^2(\hat{T})\}$, be a finite element basis function on the reference domain.
Let $D\in\{1,\mathrm{grad},\mathrm{curl},\mathrm{div}\}$ be an appropriate operator
so that $D\hat{\phi}\in L^2(\hat{T})$.
Then if two mesh cells $T_1$ and $T_2$ have the same shape,
the physical values $D\phi$ are equal and have equal integrals,
\begin{align}
  \int_{T_1} D\phi(\bx) \ d\bx
  &=
  \int_{T_2} D\phi(\bx) \ d\bx.
  \label{eq: basis equality}
\end{align}
\end{lemma}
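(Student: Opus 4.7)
The plan is to reduce both claims---pointwise equality of $D\phi$ and equality of its integrals---to statements involving only the reference basis $\hat{\phi}$ and the common Jacobian $J_T$. Since "equal physical values" compares functions living on different domains $T_1$ and $T_2$, I would first reinterpret this as equality of the pullbacks: writing $\widehat{D\phi}_{T_i}(\hatbx) := D\phi_{T_i}(\Phi_{T_i}(\hatbx))$, the goal becomes
\begin{equation*}
  \widehat{D\phi}_{T_1}(\hatbx) = \widehat{D\phi}_{T_2}(\hatbx) \quad \text{for all } \hatbx \in \hat{T}.
\end{equation*}

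The first step is the geometric input. Since $T_1$ and $T_2$ have the same shape, $T_2$ is a pure translation of $T_1$, so $\Phi_{T_2}(\hatbx) = \Phi_{T_1}(\hatbx) + \mathbf{c}$ for a constant $\mathbf{c}\in\R^d$. Differentiating gives $J_{T_1}(\hatbx) = J_{T_2}(\hatbx)$ and hence $\det J_{T_1}(\hatbx) = \det J_{T_2}(\hatbx)$ for every $\hatbx \in \hat{T}$. This is the only fact about the geometry I will use.

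Next I would proceed by cases along the de Rham complex displayed in the excerpt, in each case reading off the pullback formula. For $\mathcal{S} = L^2$ (including $D = \mathrm{div}$ applied to an $H(\mathrm{div})$ basis), the pullback is $(\det J_T)^{-1}$ times reference data; for $\mathcal{S} = H^1$ with $D = 1$, the pullback is the identity; for $D = \mathrm{grad}$ or $\mathcal{S} = H(\mathrm{curl})$, the pullback involves $J_T^{-\mathsf{T}}$; for $\mathcal{S} = H(\mathrm{div})$ or $D = \mathrm{curl}$, it involves $(\det J_T)^{-1} J_T$. In every case the pullback $\widehat{D\phi}_{T_i}$ is a function of $\hat{\phi}$, $\hatbx$, and $J_{T_i}(\hatbx)$ only. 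Since the reference basis $\hat{\phi}$ is the same for both cells and $J_{T_1} = J_{T_2}$, the two pullbacks coincide pointwise, establishing \eqref{eq: basis equality}'s pointwise counterpart.

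For the integral equality, I would apply the change of coordinates formula: for each $i$,
\begin{equation*}
  \int_{T_i} D\phi(\bx)\, d\bx = \int_{\hat{T}} \widehat{D\phi}_{T_i}(\hatbx)\, |\det J_{T_i}(\hatbx)|\, d\hatbx,
\end{equation*}
(with the transformation chosen appropriately per the case analysis above). The integrand on the right depends only on $\hat{\phi}$ and $J_{T_i}$, both of which agree for $i=1,2$, so the integrals are equal. The main obstacle is mostly notational: keeping the identification between physical values and their pullbacks clean across the four function-space cases, and being explicit that "same shape" rules out any dependence of the transforms on the translation constant $\mathbf{c}$. Once the casework is organized by the de Rham table, each individual step is immediate.
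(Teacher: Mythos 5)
Your argument is correct and follows essentially the same route as the paper's proof: both use that equal shapes imply equal Jacobians $J_{T_1}=J_{T_2}$, apply the appropriate pullback transform from the de Rham table (the paper collects your case analysis into a single symbol $w(J)\in\{I,J^{-\mathsf{T}},(\mathrm{det}\,J)^{-1}J,(\mathrm{det}\,J)^{-1}\}$), conclude pointwise equality of the transformed integrands on $\hat{T}$, and then equate the integrals via the change-of-coordinates formula.
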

\begin{proof} The proof follows the formula for the integral in the reverse direction.
Let $w(J)\in\{I,J^{-\mathsf{T}},(\mbox{det}J)^{-1}J,(\mbox{det}J)^{-1}\}$
be the term corresponding to the appropriate transformation for $D\hat{\phi}$, and let $\phi$ denote the corresponding pushforward of $\hat{\phi}$.
Then, if $T_1$ is a translation of $T_2$, we have $J_{T_1} = J_{T_2}$, hence
\[
w(J_{T_1}(\hatbx)) D\hat{\phi}(\hatbx) \mbox{det}J_{T_1}(\hatbx)
=
w(J_{T_2}(\hatbx)) D\hat{\phi}(\hatbx) \mbox{det}J_{T_2}(\hatbx) ,
\]
and therefore
\[
\int_{\hat{T}} w(J_{T_1}(\hatbx)) D \hat{\phi}(\hatbx)
\mbox{det}J_{T_1}(\hatbx) \ d\hatbx
=
\int_{\hat{T}} w(J_{T_2}(\hatbx)) D \hat{\phi}(\hatbx)
\mbox{det}J_{T_2}(\hatbx) \ d\hatbx .
\]
\end{proof}

Lemma~\ref{lem:shapes} is the fundamental motivation for our dictionary scheme.
It implies that finite element operator evaluations on cells with identical shapes are also equal,
\begin{align*}
\int_{T_1} D \phi_i(\mathbf{x}) \cdot D \phi_j(\mathbf{x}) \ d\mathbf{x}
&=
\int_{T_2} D \phi_i(\mathbf{x}) \cdot D \phi_j(\mathbf{x}) \ d\mathbf{x}.
\end{align*}
These observations are neither surprising nor new, and there are FEM libraries, such as \dealii{}, that support a capability to avoid re-computing the reference to physical Jacobian if the mesh cell is a translation of the previous mesh cell.

In a similar yet more sophisticated vein, we propose to compress physical basis quantities by performing a \emph{preprocessing step on the mesh} $\mesh$.
This is accomplished by comparing mesh cells by their Jacobians of the reference to physical transformation, and if two such Jacobians are equal, the physical basis evaluations are saved. %
To understand this better, we present examples of meshes that may naturally compress finite element data, in Figure~\ref{fig: compressible meshes}.
Structured meshes and meshes with repeated patterns and templates will clearly have cells where finite element data may be reused.
Extruded meshes with fixed extrusion widths benefit greatly, as the extrusion layers are translations of each other.
While not shown in the figure, regularly refined grids may reap similar benefits.
\begin{figure}
    \centering
    \resizebox{!}{3.8cm}{
    \begin{tikzpicture}
    \begin{axis}[
    unit vector ratio*=1 1 1,
    xmin=0, xmax=1,
    ymin=0, ymax=1,
    xtick={0,0.008,0.027,0.055,0.093,0.138,0.192,0.254,0.323,0.399,0.482,0.572,0.669,0.773,0.883,1},
    ytick={0,0.008,0.027,0.055,0.093,0.138,0.192,0.254,0.323,0.399,0.482,0.572,0.669,0.773,0.883,1},
    grid=both,
    major grid style={black},
    minor grid style={black},
    xticklabel=\empty,
    yticklabel=\empty
    ]
    \end{axis}
    \end{tikzpicture}
    } %
    \hfill
    \resizebox{!}{3.8cm}{\includegraphics[trim=5.83cm 3.95cm 7.3cm 4.2cm,clip]{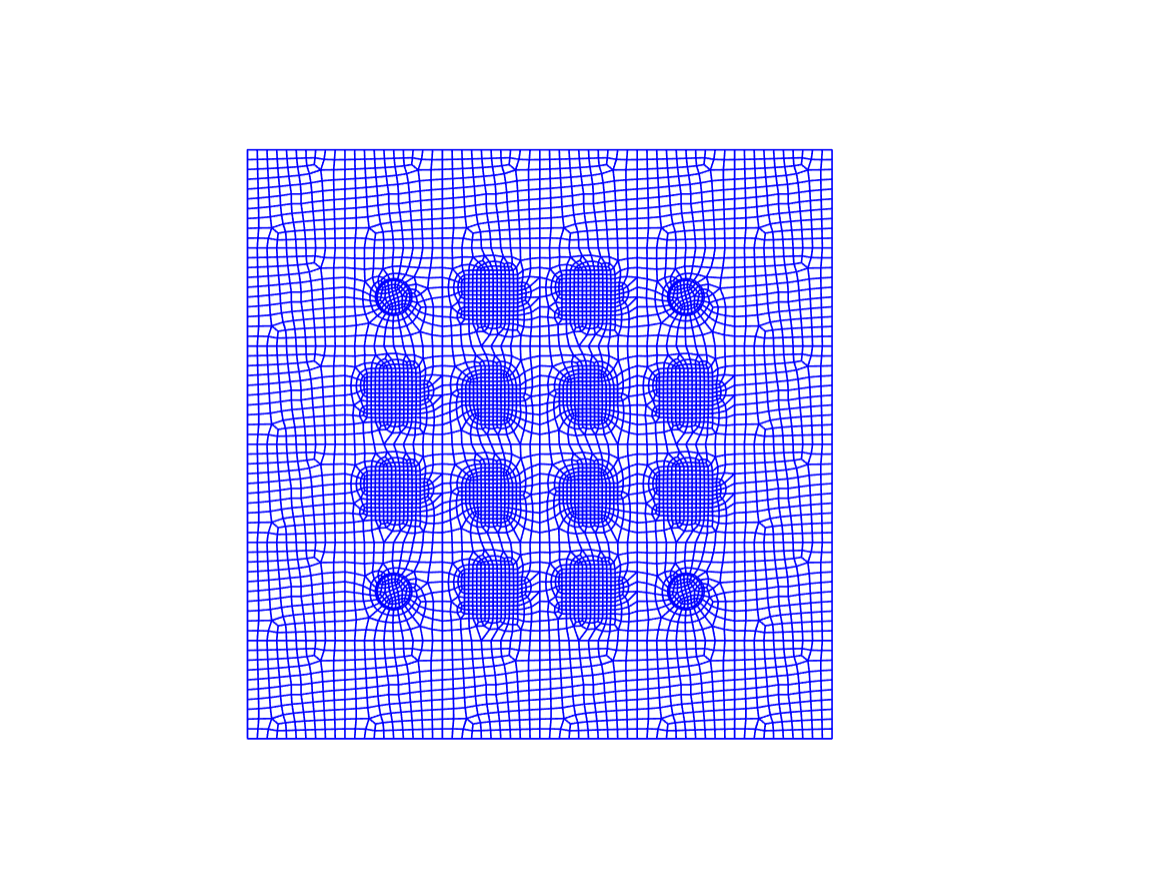}} %
    \hfill
    \resizebox{!}{3.8cm}{\includegraphics{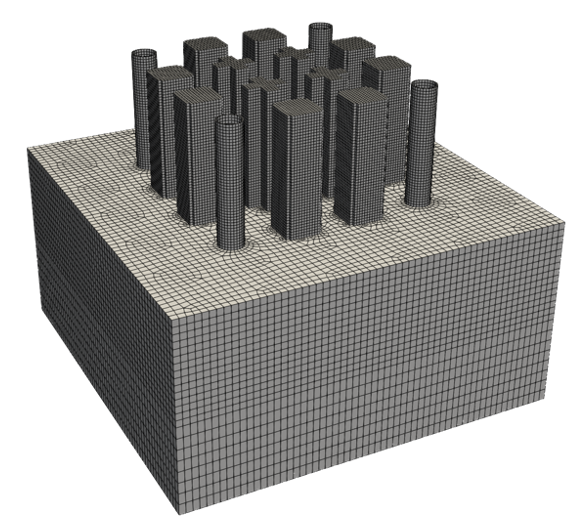}} %
    \caption{Three examples of mesh classes with redundancy: (left) structured/graded, (middle) geometrically patterned, and (right) extruded.  The latter two meshes comprise large clusters of identical cells.  The cells in the graded mesh have shapes that are identical up to a dimensional scaling.}
    \label{fig: compressible meshes}
\end{figure}

The \textit{compression ratio} of a mesh is the ratio of
the number of saved Jacobian evaluations and the total number of mesh cells.
In the case of a structured grid of equal shapes, the Jacobian needs be evaluated only once, as all cells are translations of each other.
In that case, the compression ratio is $\frac{n-1}{n}$, where $n$ is the total number of mesh cells.
Note the compression ratio can never be 100\%, as assembling a finite element operator always requires basis evaluations on at least one cell.
Some concrete examples of compression ratios are shown in Figure~\ref{fig: mesh compression}.%
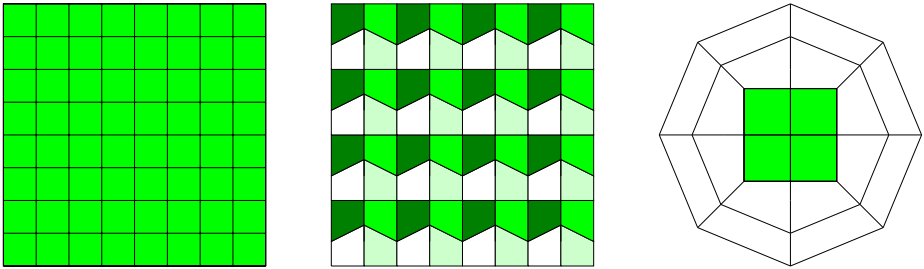
\begin{figure}
  \centering
  \resizebox{!}{3.5cm}{
  \begin{tikzpicture}
  \draw[step=0.5,fill=green] (0,0) grid (4,4) rectangle (0,0);
  \begin{scope}[xshift=5cm]
  \foreach \y in {0,1,2,3}{
  \foreach \x in {0,1,2,3}{
    \draw[fill=white] (\x,\y) -- (\x+0.5,\y) -- (\x+0.5,\y+0.625) -- (\x,\y+0.375) -- (\x,\y);
    \draw[fill=green!20!white] (\x+0.5,\y) -- (\x+1,\y) -- (\x+1,\y+0.375) -- (\x+0.5,\y+0.625) -- (\x+0.5,\y);
    \draw[fill=green!50!black] (\x,\y+0.375) -- (\x+0.5,\y+0.625) -- (\x+0.5,\y+1) -- (\x,\y+1) -- (\x,\y+0.375);
    \draw[fill=green] (\x+0.5,\y+0.625) -- (\x+1,\y+0.375) -- (\x+1,\y+1) -- (\x+0.5,\y+1) -- (\x+0.5,\y+0.375);
  }}
  \end{scope}
  \begin{scope}[xshift=12cm,yshift=2cm]
  \foreach \r in {1.5,2}{
  \foreach \t in {0,45,90,135,180,225,270,315}{%
    \draw ({\r*sin(\t)},{\r*cos(\t)}) -- ({\r*sin(\t+45)},{\r*cos(\t+45)});
    \draw ({\r*sin(\t)},{\r*cos(\t)}) -- ({(\r-0.5)*sin(\t)},{(\r-0.5)*cos(\t)});
  }
  }
  \draw[step=0.707,fill=green] (-0.707,-0.707) grid (0.707,0.707) rectangle (-0.707,-0.707);
  \draw (-1,0) -- (1,0); %
  \draw (0,-1) -- (0,1); %
  \end{scope}
  \end{tikzpicture}}
\caption{Illustration of (left) a structured square mesh with one unique shape and a lossless compression ratio ($\varepsilon = \varepsilon_0$) of $98.4\%$ (middle) a trapezoidal mesh with only four unique shapes and a lossless compression ratio of $93.75\%$, and (right) a circle mesh with a lossless compression ratio of $15\%$. Cells with a similar shape within a single mesh are highlighted with the same color.}
\label{fig: mesh compression}
\end{figure}

Due to floating point error, Jacobian comparisons based on equality are not practical.
Instead, our algorithm uses a tolerance $\varepsilon>0$.
A list of ``unique'' cells is constructed by starting with an empty list
and looping over each mesh cell $T_i$, which is compared to each cell $T_j$
in the list of unique cells.
A distance $d(T_i,T_j)$ is computed using the cell Jacobians,
\begin{align*}
  d(T_i,T_j)
  &=
  \frac{\sqrt{\sum_{k=1}^q w_k \| J_{T_i}(\hatbx_k) - J_{T_j}(\hatbx_k) \|_{F}^2}}{\sqrt{\sum_{k=1}^q w_k \| J_{T_j}(\hatbx_k) \|_{F}^2}} ,
\end{align*}
where $\hatbx_k$ denote the quadrature points on the reference domain, with weights $w_k$, $k=1,2,\dots,q$, and $\|\cdot\|_F$ is the Frobenius norm.
If $d(T_i,T_j) < \varepsilon$, then a match is found, and basis evaluations on $T_i$ are replaced by the basis evaluations on $T_j$.
Later, we will discuss different types of compression, based on $\varepsilon$.
Let $\varepsilon_0$ be the largest acceptable compression tolerance
for a given application without a significant impact on accuracy.
We say that the compression is \textit{lossless} when $\varepsilon\leq\varepsilon_0$, and if $\varepsilon>\varepsilon_0$
we refer to the compression as \textit{lossy}.
Algorithm~\ref{algo: mesh compression} summarizes our proposed scheme.
\begin{algorithm}[H]
\caption{Construction of the mesh dictionary} \label{algo: mesh compression}
\begin{algorithmic}[1]
\State \textbf{Input:} Mesh $\mesh=\{T_1, T_2, \dots, T_N \}$, tolerance $\varepsilon$.
\State $\mathcal{B}_\varepsilon \leftarrow \{\}$, $\psi_\varepsilon \leftarrow\vec{0}\in\N^N$
\For{$i=1,2,\dots,N$}
  \State match$\leftarrow$\texttt{false}
  \For{$T_j\in\mathcal{B}_\varepsilon$}
    \If{$d(T_i,T_j) <\varepsilon$}
      \State match$\leftarrow$\texttt{true}; $\psi_\varepsilon(i) \leftarrow j$; \textbf{break}
    \EndIf
  \EndFor
  \If{match=\texttt{false}}
    \State append $T_i$ to $\mathcal{B}_\varepsilon$; $\psi_\varepsilon(i) \leftarrow |\mathcal{B}_\varepsilon|$
  \EndIf
\EndFor
\State \textbf{Output:} Dictionary of cells, $\mathcal{B}_\varepsilon$, with lookup indices $\psi_\varepsilon\in\N^N$.
\end{algorithmic}
\end{algorithm}
This algorithm is a greedy approach as it loops over cells sequentially
while searching for matches.
While it is not guaranteed to find the best match for a cell,
it finds the first match for a cell within the specified tolerance.
Many performance-related optimizations are possible, however they are beyond the scope of this paper.
Because the output of Algorithm~\ref{algo: mesh compression} depends on the tolerance $\varepsilon$, the compression ratio, given by $\frac{N-|\mathcal{B}_\varepsilon|}{N}$, is a function of $\varepsilon$.
We will refer to the curve relating the compression ratio to $\varepsilon$ as the \textit{compression curve}.
Figure \ref{fig: compression curve} demonstrates the curve for a circle mesh.
The algorithm is implemented in the FEM software library \mrhyde~\cite{mrhyde2023github}.
\begin{figure}
\includegraphics[height=0.4\textwidth]{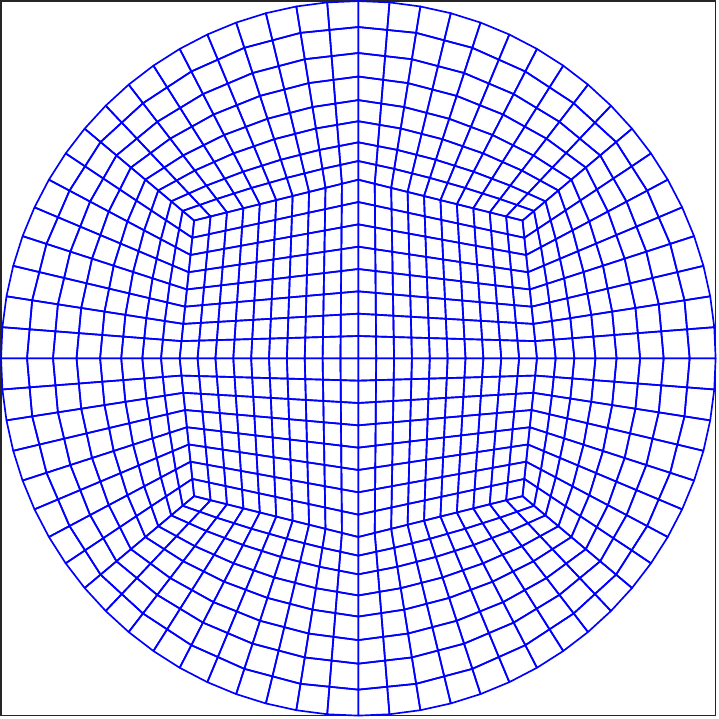}
\hfill
\includegraphics[height=0.4\textwidth]{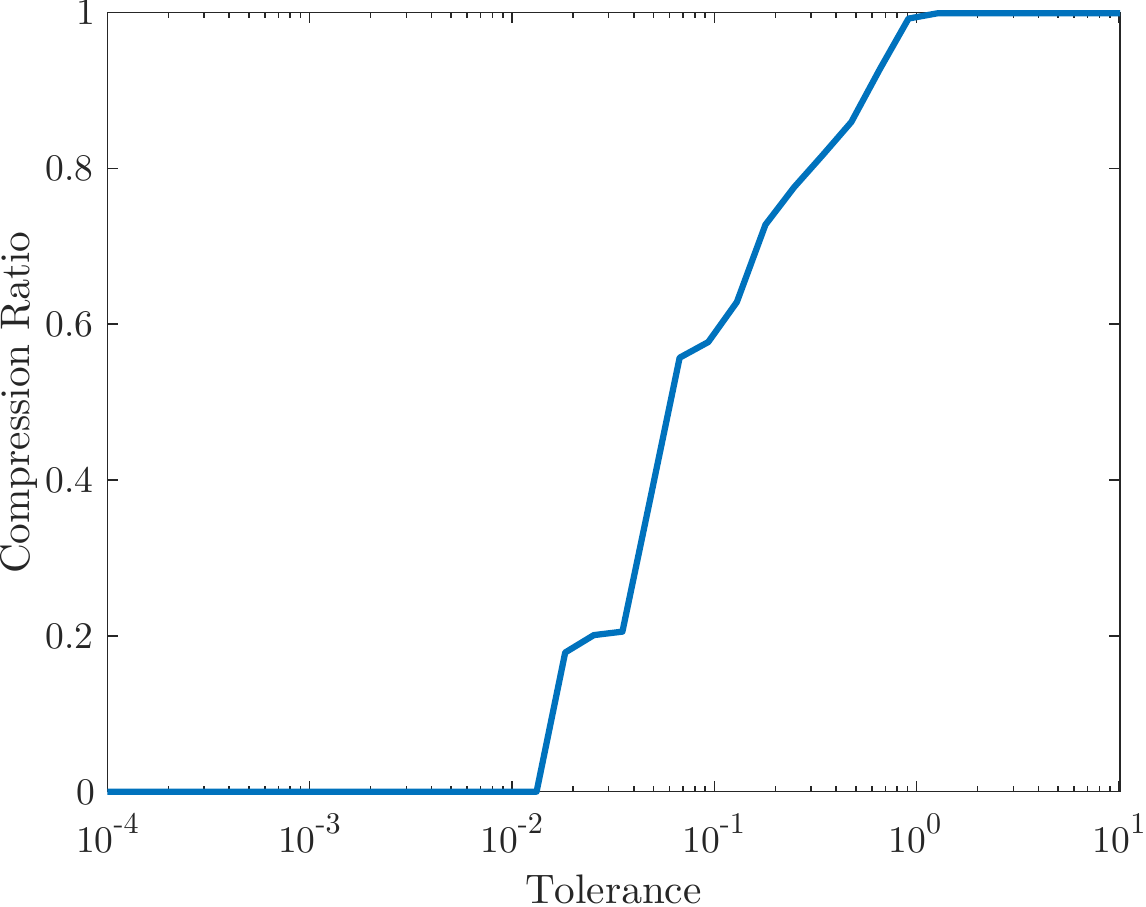}
\caption{Compression curve for a circle mesh; no compression is observed for $\varepsilon < 10^{-2}$.}
\label{fig: compression curve}
\end{figure}

After the compression algorithm is applied, the physical basis values
for each basis function at each quadrature point are only calculated for
the shapes in $\mathcal{B}_\varepsilon$.
This means the compression ratio for a mesh is directly related to the
amount of compression obtained on the physical finite element basis values.
For near-zero values of $\varepsilon$, this compression algorithm
introduces near-zero errors in the discretization, as the reused
basis values are nearly equal.
While it is possible to use larger values of $\varepsilon$, this introduces
error in the discretization, which we will analyze in the future.
Memory profiles and timing data for a heat transfer simulation in \mrhyde{} are shown in Figure~\ref{fig: compression speedup}.
We use a uniform $32,000\times32,000$ mesh of square cells and a compression tolerance of~$\varepsilon=10^{-10}$.
The simulations are run on a single compute node, in serial, for four time steps, where each brief memory decrease in the final flat region of the profile signals the start of a new time step.
Since the mesh is uniform, the basis compression reduces the memory cost of a billion sets of basis function data to a single set of basis function data, without negatively impacting the runtime.
 \begin{figure}
 \centering
 \includegraphics[height=0.4\textwidth]{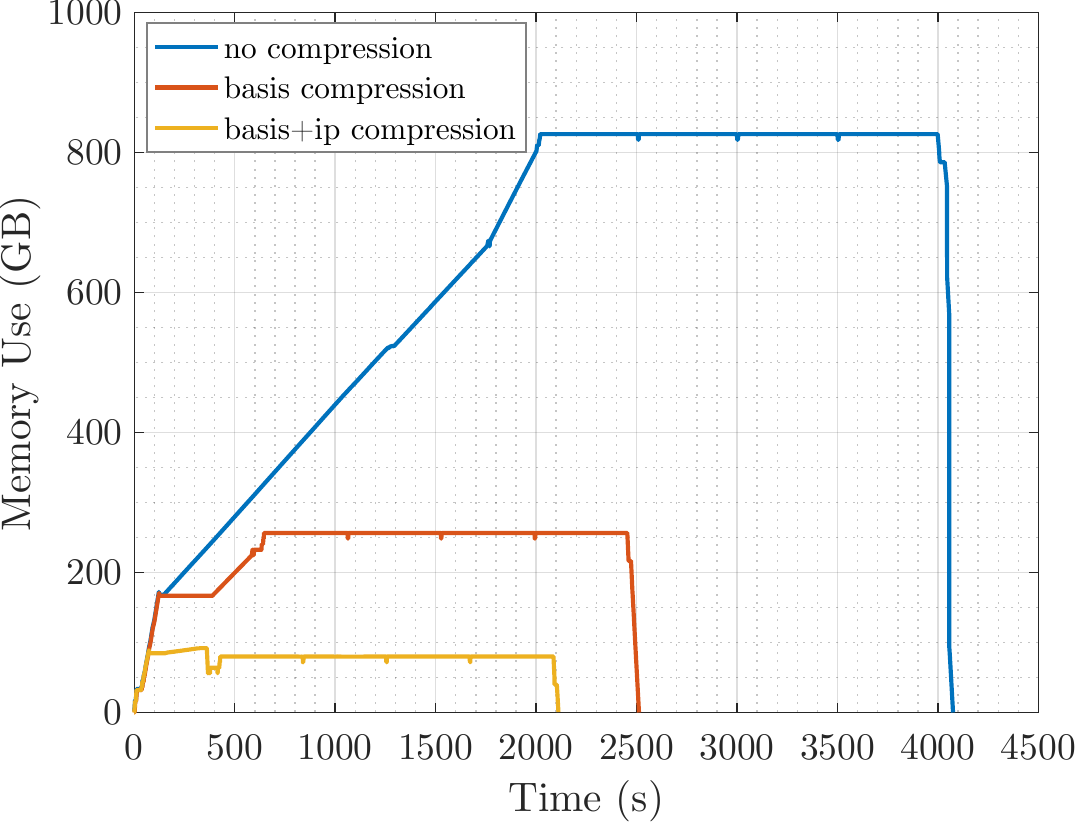}
 \caption{Memory use and runtime for a 1.024 billion-cell heat transfer simulation.}
 \label{fig: compression speedup}
 \end{figure}
An additional curve in Figure~\ref{fig: compression speedup} shows
integration point compression in addition to basis compression,
marked by ``basis+ip compression.''
In order to compress the evaluation of the residual of the heat equation, information at quadrature points must be retained.
This is accomplished by performing compression on the $x$ and $y$ coordinates individually and storing every quadrature point as a pair of integers, to index the unique entries for both.
Combining the integration point compression with the finite element basis compression reduces the maximum memory use from over 820GB to 92GB, while maintaining computational performance.

Many finite element meshes lack redundancy and do not lend themselves immediately to finite element data compression.
Additionally, meshing constraints and intricate geometry often restrict the classes of meshes that are viable for certain applications.
To extend the benefits of basis compression to such scenarios, we now focus on r-adaptive mesh modifications, with the goal to enhance redundant structure.

\section{Enhancing mesh redundancy: Problem formulation}
\label{sec:ProblemFormulation}

Keeping the performance results from the previous section in mind, we now propose to modify a given mesh without changing its topology, with the goal to increase its compression ratio for a given $\varepsilon$.
We focus on the quadrilateral case with straight line geometry, and note that our optimization formulation can be extended to three dimensions and other cell shapes and geometries.
Our choice of a quadrilateral mesh, rather than a triangular mesh, is to avoid the loss of generality inherent in specializations to simplicial meshes.
Let $\mesh = \{ T_1,T_2,\dots,T_{N} \}$ be a given mesh of a domain $\Omega\subset\R^d$, $d=2$ with $n_p$ vertices $\hat{\bp} = \{ \hat{p}_1, \hat{p}_2, \dots, \hat{p}_{n_p} \}$, and let $\mathcal{B}_\varepsilon$ be the dictionary obtained by applying Algorithm~\ref{algo: mesh compression} to $\mesh$ for a tolerance $\varepsilon$.
Our goal is to produce a new mesh $\mathcal{T}_{h,r}$ such that the resulting $\mathcal{B}_{\varepsilon,r}$ is smaller.
Let $\bp = \{ p_1,p_2,\dots,p_{n_p} \}$ be the list of vertex variables to optimize, where $p_i = (x_i,y_i)$, $i=1,2,\dots,n_p$.
Let the double-indexing $p_{i,j} = (x_{i,j},y_{i,j})$ denote vertex $j$ of mesh cell $i$, $i=1,2,\dots,N$, $j=1,2,\dots,n_{cv}$, with $n_{cv}$ denoting the number of vertices in each cell. We assume a fixed mesh topology given by $\mesh$ and the counterclockwise cell orientation.

Let the reference triangle be given by points $(0,0)$, $(1,0)$, $(0,1)$.
Let the reference square be given by points $(0,0)$, $(1,0)$, $(1,1)$, $(0,1)$.
Figure \ref{fig: reference triangle square}
shows the reference shapes for triangles and squares.
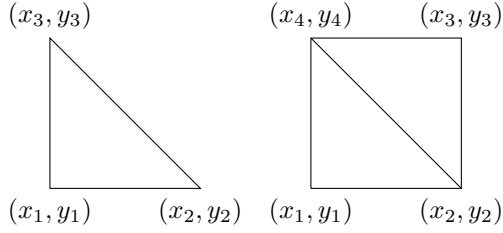
\begin{figure}
  \centering
  \begin{tikzpicture}[domain=0:3]
    \draw (0,0) -- (2,0) -- (0,2) -- (0,0);
    \node at (0,-0.3) {$(x_1,y_1)$};
    \node at (2,-0.3) {$(x_2,y_2)$};
    \node at (0,2.3) {$(x_3,y_3)$};
  \end{tikzpicture}
  \begin{tikzpicture}[domain=0:3]
    \draw (0,0) -- (2,0) -- (2,2) -- (0,2) -- (0,0);
    \draw (2,0) -- (0,2);
    \node at (0,-0.3) {$(x_1,y_1)$};
    \node at (2,-0.3) {$(x_2,y_2)$};
    \node at (2,2.3) {$(x_3,y_3)$};
    \node at (0,2.3) {$(x_4,y_4)$};
  \end{tikzpicture}
  \caption{Reference square and triangle geometries, both subsets of $[0,1]^2$.}
  \label{fig: reference triangle square}
\end{figure}
For simplicial $T$ with straight line geometry, $J_T$ is a constant.
For example, $J_T$ for the triangle in Figure~\ref{fig: reference triangle square} is given by
\begin{align*}
  J_{T}(\hat{\mathbf{x}})
  &=
  \left[
  \begin{array}{cc}
  (x_2-x_1) & (x_3-x_1)
  \\
  (y_2-y_1) & (y_3-y_1)
  \end{array}
  \right],
\end{align*}
regardless of $\hat{\mathbf{x}}$.
However, $J_T$ is not constant for quadrilaterals, even with straight line edges,
instead yielding
\begin{align*}
  J_{T}(\hat{\mathbf{x}})
  &=
  \left[
  \begin{array}{cc}
  (x_2-x_1) + (x_1-x_2+x_3-x_4)\hat{y} & (x_4-x_1) + (x_1-x_2+x_3-x_4)\hat{x}
  \\
  (y_2-y_1) + (y_1-y_2+y_3-y_4)\hat{y} & (y_4-y_1) + (y_1-y_2+y_3-y_4)\hat{x}
  \end{array}
  \right].
\end{align*}
This requires evaluating $J_T$ at many
quadrature points due to the spatial dependence.
In order to circumvent this issue, we split the quadrilateral into
two triangles using the vertex index triples $(1,2,4)$ and $(3,4,2)$,
as depicted in Figure \ref{fig: reference triangle square},
and compute Jacobians for both triangles.
On a general mesh cell $T_i$, $1\leq i \leq N$, we define
\begin{align*} 
  c_{i,x2}
  &=
  x_{i,2} - x_{i,1},
  & %
  d_{i,x4}
  &=
  x_{i,4} - x_{i,3},
  \\ %
  c_{i,x4}
  &=
  x_{i,4} - x_{i,1},
  & %
  d_{i,x2}
  &=
  x_{i,2} - x_{i,3},
  \\ %
  c_{i,y2}
  &=
  y_{i,2} - y_{i,1},
  & %
  d_{i,y4}
  &=
  y_{i,4} - y_{i,3},
  \\ %
  c_{i,y4}
  &=
  y_{i,4} - y_{i,1},
  & %
  d_{i,y2}
  &=
  y_{i,2} - y_{i,3}.
\end{align*}
The two triangle Jacobians composed of these scalars, obtained by splitting the cell~$T_i$, define a computationally efficient proxy for the Jacobian of the quadrilateral.
We then define the row vector $K_i(\bp)$, $i=1,2,\dots,N$, consisting of the Jacobian entries,
\begin{align}
  K_i(\bp)
  &=
  [ c_{i,x2}, c_{i,x4}, d_{i,x4}, d_{i,x2}, c_{i,y2}, c_{i,y4}, d_{i,y4}, d_{i,y2} ].
  \label{eq: jacobian vector}
\end{align}
For each cell $T_i$ in the mesh, we also associate a ``target'' shape $\mu_i$, $i=1,2,\dots,N$, which is a row vector of eight entries as well, analogous to $K_i(\bp)$ and defined in Section~\ref{sec:targets}.
Each of these row vectors are rows of larger matrices $K(\bp),\mu\in\R^{N\times8}$.
Our goal is to reduce the misfit between $K_i(\bp)$ and $\mu_i$, for all $i=1,2,\dots,N$, i.e., the misfit between $K(\bp)$ and~$\mu$, while also avoiding significant mesh distortions that cause low cell-shape quality and mesh tangling. 

\subsection{Formulation of the objective function}
There are several ways in which to measure misfit between $K(\bp)$ and~$\mu$.
For the remainder of the paper, we focus on quantities of interest that promote lossless finite element basis compression.
Ideally, we would like to solve a sparse matching problem between the rows of $K(\bp)$ and the rows of $\mu$, which would maximize the number of zero rows in $K(\bp) - \mu$.
Approaches enforcing the so-called \emph{group sparsity}, such as those using the mixed $\ell_1/\ell_q$ norm, $q>1$, have been suggested~\cite{bach2012structured}, however they lead to non-smooth quantities of interest of the form
\begin{align} \label{eq:mixed}
  \sum_{i=1}^N \| K_i(\bp) - \mu_i \|_q.
\end{align}
The non-smoothness, induced by the norm, is difficult to handle algorithmically---we are not aware of efficient optimization algorithms for the minimization of~\eqref{eq:mixed} that would scale with the number of mesh cells,~$N$.
In contrast, the functional
\begin{align} \label{eq:ell2squared}
  \sum_{i=1}^{N} \| K_i(\bp) - \mu_i \|_{2}^2,
\end{align}
which utilizes the square of the 2-norm, is smooth and relatively easy to minimize using derivative-based optimization methods.
Unfortunately, any notion of group sparsity may be lost, as the overall misfit
is spread across all mesh cells.
Minimizing~\eqref{eq:ell2squared} typically produces no zero-misfit terms whatsoever, i.e., no lossless compression, unless $K_i(\bp) - \mu_i = 0$ is achievable at the optimum for all $i=1,2,\dots,N$, which is possible only for very special domains and meshes.
However, this challenge also motivates a key idea:
\begin{gather*}
    \text{identify a subset of cell indices,}
    \;\; \mathcal{Z} \subseteq \{1, 2, \dots, N\},\\
    \text{such that} \;\; K_i(\bp) - \mu_i = 0 \;\;
    \text{is achievable for all} \;\; i \in \mathcal{Z}.
\end{gather*}
Therefore, given a set $\mathcal{Z}$, the functional to minimize is
\[
   \sum_{i \in \mathcal{Z}} \| K_i(\bp) - \mu_i \|_{2}^2,
\]
leading to our primary ``lossless'' objective functional
\begin{equation} \label{eq:ell2lossless}
  L(\bp) = \sum_{i = 1}^{N} \alpha_i \| K_i(\bp) - \mu_i \|_{2}^2,
  \quad \alpha_i \in \{0,1\}, \quad \text{for} \; i=1, 2, \dots, N,
\end{equation}
where $\alpha_i = 1$ if $i \in \mathcal{Z}$ and $\alpha_i = 0$ otherwise.
We note its obvious connection to the concept of group sparsity.
Finally, to identify the set $\mathcal{Z}$ we will also consider a relaxed version of~\eqref{eq:ell2lossless}, namely
\begin{equation} \label{eq:ell2relaxed}
  L_r(\bp) = \sum_{i = 1}^{N} \alpha_i \| K_i(\bp) - \mu_i \|_{2}^2,
  \quad \alpha_i \in \mathbb{R}^+_0, \quad \text{for} \; i=1, 2, \dots, N.
\end{equation}

\begin{remark}
There is one challenge when utilizing this matching approach with a target $\mu$.
Permutations of local node orderings in mesh cells cause modifications to $K(\bp)$.
For instance, consider two identical unit squares $[0,1]^2$ with the local vertex orderings specified in Figure \ref{fig: square node permutation}.
We have $K_1(\bp) = [1,0,-1,0,0,1,0,-1]$ and
$K_2(\bp) = [0,-1,0,1,1,0,-1,0]$.
This results in $\|K_1(\bp)-K_2(\bp)\|_{\ell_2}^2 = 64$, rather than zero.
For this reason, node permutations can adversely affect the compressibility of the finite element basis functions.
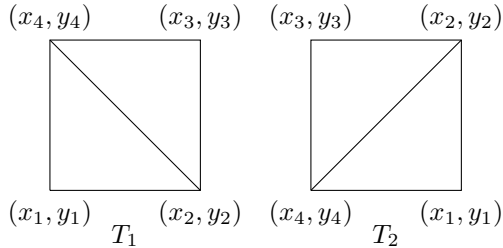
\begin{figure}
  \centering
  \begin{tikzpicture}[domain=0:3]
    \draw (0,0) -- (2,0) -- (2,2) -- (0,2) -- (0,0);
    \draw (2,0) -- (0,2);
    \node at (0,-0.3) {$(x_1,y_1)$};
    \node at (2,-0.3) {$(x_2,y_2)$};
    \node at (2,2.3) {$(x_3,y_3)$};
    \node at (0,2.3) {$(x_4,y_4)$};
    \node at (1,-0.6) {$T_1$};
  \end{tikzpicture}
  \begin{tikzpicture}[domain=0:3]
    \draw (0,0) -- (2,0) -- (2,2) -- (0,2) -- (0,0);
    \draw (0,0) -- (2,2);
    \node at (0,-0.3) {$(x_4,y_4)$};
    \node at (2,-0.3) {$(x_1,y_1)$};
    \node at (2,2.3) {$(x_2,y_2)$};
    \node at (0,2.3) {$(x_3,y_3)$};
    \node at (1,-0.6) {$T_2$};
  \end{tikzpicture}
  \caption{Squares with differing local node orderings.}
  \label{fig: square node permutation}
\end{figure}
Permutation issues may appear even in regular refinements of consistently oriented meshes, as some refinement algorithms permute the local order in each of a cell's children.
To avoid issues related to this, we pre-process the mesh to place the first node in the ``lower-left'' corner, with the following local nodes proceeding counter-clockwise.
By doing so, for quadrilaterals we rule out three permutations.
This is demonstrated in Figure \ref{fig: circlemesh permutations}, where a circular mesh is re-oriented to be more amenable to compression and optimization.
There are works such as \cite{agelek2017orienting}, showing that meshes may be oriented consistently in two dimensions as a consequence of the Jordan curve theorem, which we have considered as an alternative.
Still, a deeper study of local mesh orientation strategies remains beyond the scope of this paper.
\end{remark}

\begin{figure}
  \hfill
  \includegraphics[height=0.35\textwidth]{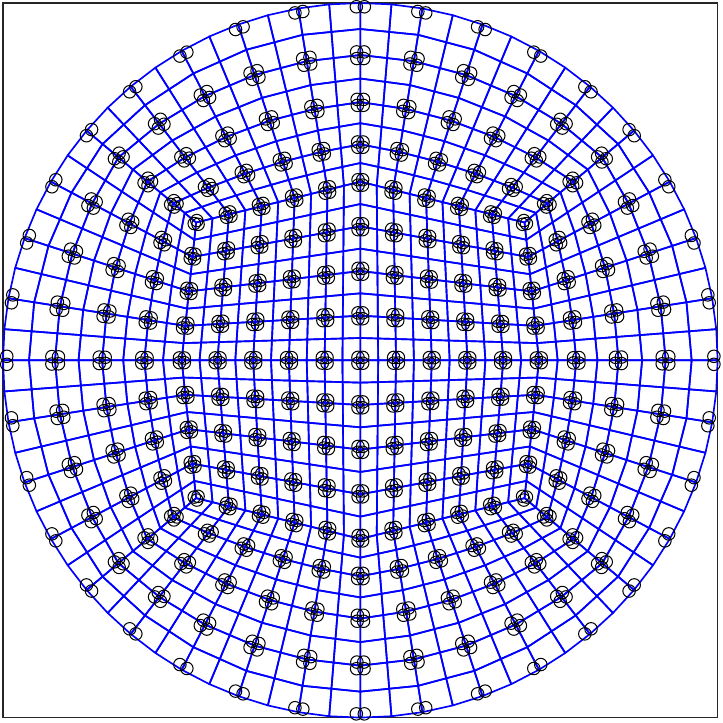}
  \hfill
  \includegraphics[height=0.35\textwidth]{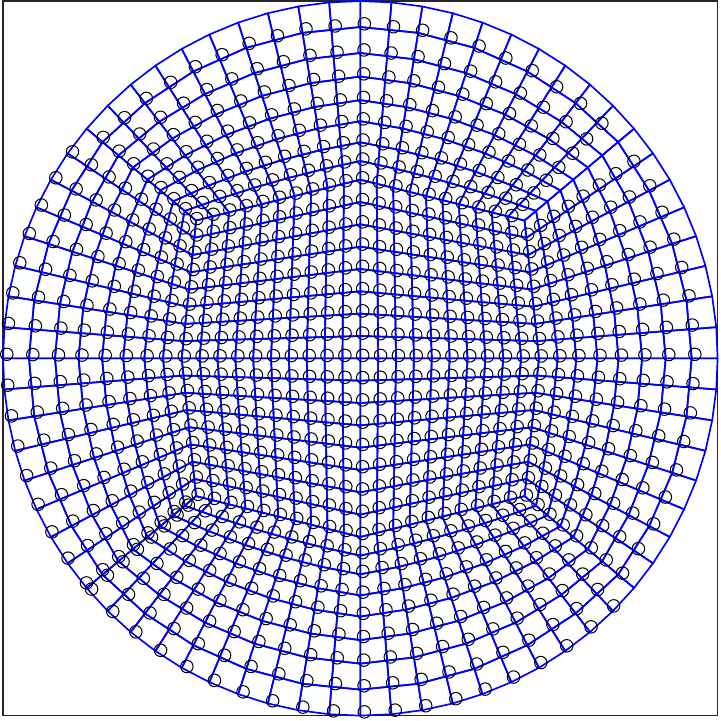}
  \hspace{7ex}
  \caption{A circle mesh with the first local vertex marked by a small black circle in each cell, (left) before and (right) after the ``lower-left'' modification.}
  \label{fig: circlemesh permutations}
\end{figure}

\subsection{Formulation of the constraints}
We require that the mesh modifications, due to minimizing our primary objective function~\eqref{eq:ell2lossless}, maintain the quality of the finite element approximation given by the original mesh.
Among other requirements, mesh cells should not become tangled or inverted.
Therefore, we consider restricting the volume change for each mesh cell.
Similar constraints were considered in~\cite{ridzal2016meshcorrection},
and were found to successfully mitigate cell distortion. 
In this context, we extend the volume \emph{equality} constraints from~\cite{ridzal2016meshcorrection} to \emph{inequality} constraints.
This extension requires an extension of the composite-step sequential quadratic programming (SQP) algorithm~\cite{heinkenschloss2014matrix}, used in~\cite{ridzal2016meshcorrection}, to the recently developed augmented Lagrangian equality-constrained SQP (ALESQP) algorithm~\cite{antil2023alesqp}.
Additionally, boundary nodes and nodes defining important geometric features should remain fixed.
The latter constraint is handled easily by eliminating variables (vertices) from the optimization problem, and we do not elaborate on it further.
We note that additional constraints, such as cell convexity constraints, may be useful in certain applications, and that they are easily included in our formulation, e.g., following~\cite[Eq.~(10)]{ridzal2016meshcorrection}.
The ALESQP algorithm, discussed in Section~\ref{sec:Algorithms}, is well equipped to handle such additional inequality constraints, and it enables a more efficient and more robust scheme compared to solving the ad-hoc logarithmic barrier penalty formulation~\cite[Eq.~(21)]{ridzal2016meshcorrection} using SQP.
To keep our presentation focused, we do not include convexity constraints in the formulation.

We formulate the volume constraints as follows.
Let $v_i(\bp)$ be the volume of cell~$i$, $i=1, 2, \dots N$,
given by the well-known shoelace formula
\begin{align}
  v_i(\bp)
  &=
  \frac{1}{2} \sum_{j=1}^{n_{cv}} x_{i,j} y_{i,j+1} - x_{i,j+1} y_{i,j},
  \quad
  i=1,2,\dots,N,
  \label{eq: shoelace}
\end{align}
where $n_{cv}=4$ for quadrilaterals, and index $n_{cv}+1=5$ is associated with $5 \pmod{n_{cv}}$, i.e., index $1$.
Let $\vmin = \min_{i \in \{1, 2, \dots N\}} v_i(\hat{\bp})$
and $\vmax = \max_{i \in \{1, 2, \dots N\}} v_i(\hat{\bp})$ be the minimum and maximum cell volumes on the initial mesh $\mesh$, and let $0<\gamma<1$ be a fixed factor that controls how much the mesh cells may grow or shrink.
Depending on the application, for each cell we define the lower and upper volume bounds, $v^{\text{lo}}_i$ and $v^{\text{up}}_i$, $i=1,2,\dots, N$, respectively, in two ways: using global quantities, where
\begin{equation} \label{eq:globalbounds}
  v^{\text{lo}}_i = (1-\gamma)\vmin \quad \text{and} \quad
  v^{\text{up}}_i = (1+\gamma)\vmax , \quad i = 1, 2, \dots N, 
\end{equation}
or using local quantities, where
\begin{equation} \label{eq:localbounds}
  v^{\text{lo}}_i = (1-\gamma) v_i(\hat{\bp}) \quad \text{and} \quad
  v^{\text{up}}_i = (1+\gamma) v_i(\hat{\bp}), \quad i = 1, 2, \dots N.
\end{equation}
We collect the volume functionals and quantities into vectors,
$v(\bp) : \mathbb{R}^{n_p} \rightarrow \mathbb{R}^N$,
$v^{\text{lo}} \in \mathbb{R}^N$, and
$v^{\text{up}} \in \mathbb{R}^N$.
This completes the formulation of the optimization problem.
In summary, we seek $\bp \in \mathbb{R}^{n_p}$ that solves
\begin{subequations}
\label{eq:optproblem}
\begin{align}
  \underset{\bp}{\text{minimize}} &\;\;  L(\bp) \label{eq:optproblemobj} \\
  \text{subject to} &\;\;
    v^{\text{lo}} \le v(\bp) \le v^{\text{up}} .  \label{eq:optproblemcon}
\end{align}
\end{subequations}
Next we show that problem~\eqref{eq:optproblem} has a solution.

\begin{lemma}
The feasible set defined in~\eqref{eq:optproblemcon} is nonempty for both definitions of the bounds, \eqref{eq:globalbounds} and~\eqref{eq:localbounds}.
\end{lemma}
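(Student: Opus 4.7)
The plan is essentially trivial: to show that the feasible set is nonempty, I will exhibit a specific feasible point, namely the original mesh vertex positions $\hat{\bp}$.

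First I would verify feasibility of $\hat{\bp}$ under the local bounds~\eqref{eq:localbounds}. Since $0 < \gamma < 1$ and the initial mesh cell volumes satisfy $v_i(\hat{\bp}) > 0$ for $i = 1, 2, \dots, N$ (the mesh is non-degenerate), we immediately obtain
\begin{equation*}
(1-\gamma)\, v_i(\hat{\bp}) \;\le\; v_i(\hat{\bp}) \;\le\; (1+\gamma)\, v_i(\hat{\bp}),
\end{equation*}
which is exactly the $i$th local constraint. Hence $\hat{\bp}$ lies in the feasible set under the local bounds.

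Next I would verify feasibility of $\hat{\bp}$ under the global bounds~\eqref{eq:globalbounds}. By the definitions $\vmin = \min_i v_i(\hat{\bp})$ and $\vmax = \max_i v_i(\hat{\bp})$, we have $\vmin \le v_i(\hat{\bp}) \le \vmax$ for every $i$. Combined with $0 < \gamma < 1$ and $\vmin > 0$, this yields
\begin{equation*}
(1-\gamma)\, \vmin \;\le\; \vmin \;\le\; v_i(\hat{\bp}) \;\le\; \vmax \;\le\; (1+\gamma)\, \vmax,
\end{equation*}
so $\hat{\bp}$ is again feasible.

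There is no real obstacle here; the only implicit assumption being used is that the initial mesh has strictly positive cell volumes, which is guaranteed by the standing assumption that $\mesh$ is a valid (non-tangled) mesh with counterclockwise-oriented cells, so that the shoelace formula~\eqref{eq: shoelace} produces positive values. Thus both feasible sets contain at least the point $\hat{\bp}$ and are therefore nonempty.
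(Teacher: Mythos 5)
Your proposal is correct and follows essentially the same route as the paper: both exhibit the original configuration $\bp = \hat{\bp}$ as a feasible point and verify the bounds via the chain $(1-\gamma)\vmin \le (1-\gamma)v_i(\hat{\bp}) \le v_i(\hat{\bp}) \le (1+\gamma)v_i(\hat{\bp}) \le (1+\gamma)\vmax$. Your explicit remark that positive (non-degenerate) cell volumes are needed is a fair point the paper leaves implicit, but it does not change the argument.
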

\begin{proof}
The original mesh configuration, given by the points $\bp=\hatbp$, is inside the feasible regions.
To see this, recall $0 < \gamma < 1$ and note that, for $i=1, 2, \dots, N$,
\begin{align*}
  (1-\gamma) \vmin \le (1-\gamma) v_i(\hat{\bp})
  \le v_i(\hatbp)
  \le (1+\gamma) v_i(\hat{\bp}) \le (1+\gamma) \vmax.
\end{align*}
\end{proof}

\section{Enhancing mesh redundancy: Algorithms}
\label{sec:Algorithms}
There are three key computational components in our proposed lossless mesh optimization: the solution of the optimization problem~\eqref{eq:optproblem}, the computation of the cell targets $\mu$ and the identification of the index set $\mathcal{Z}$.
We solve~\eqref{eq:optproblem} using the ALESQP algorithm~\cite{antil2023alesqp}.
We compute cell targets using k-medoid clustering.
We compute the index set using a bracketing procedure.
The bracketing, clustering and the solution of~\eqref{eq:optproblem} are
integrated in an algorithm whose performance scales linearly with
the number of mesh cells, $N$.

\subsection{Mesh optimization using ALESQP}
\label{sec:SolutionMethod}

The ALESQP algorithm uses an augmented Lagrangian penalty formulation to handle inequality constraints, while relying on the repeated solution of equality-constrained optimization subproblems with the composite-step SQP approach developed in~\cite{heinkenschloss2014matrix}.
General inequality constraints are handled in ALESQP using slack
variables.
Specifically, we define slack variables $s \in \mathbb{R}^N$, $s_i = v_i(\bp)$, $i=1,2,\dots,N$,
and replace~\eqref{eq:optproblem} with
\begin{subequations}
\label{eq:optslack}
\begin{align}
  \underset{\bp,s}{\text{minimize}}  &\;\;  L(\bp) \\
  \text{subject to} &\;\;  v(\bp) - s = 0,
  \\
  &\;\;
  v^{\text{lo}} \le s \le v^{\text{up}}.
\end{align}
\end{subequations}
The subproblems solved by ALESQP penalize inequality constraints, such as the bounds on slack variables, and maintain equality constraints explicitly.  They read
\begin{subequations} \label{eq:subAL}
\begin{align}
  \underset{\bp,s}{\text{minimize}}  &\;\;  L(\bp) + 
  \frac{1}{2r} \left\| r\left( \frac{\lambda}{r} + s -
  \max \Bigl( \min \bigl( \frac{\lambda}{r} + s, v^{\text{lo}} \bigr), v^{\text{up}} \Bigr) \right) \right\|_2^2 \label{eq:subALobj}
  \\
  \text{subject to} &\;\;  v(\bp) - s = 0,  \label{eq:subALcon}
\end{align}
\end{subequations}
where $r>0$ is a scalar computed by ALESQP, $\lambda \in \mathbb{R}^N$ is a Lagrange multiplier corresponding to the slack variables $s$,
and $\min$ and $\max$ are applied elementwise.
We note that the objective function~\eqref{eq:subALobj} is differentiable
despite the presence of non-smooth $\min$ and $\max$ functions.
Subproblem~\eqref{eq:subAL} is solved using the composite-step SQP method, in which the trial step is split into a quasi-normal step, which aims to satisfy the linearized equality constraint, and a tangential step, which aims to improve optimality, while maintaining the linearized feasibility gained by the quasi-normal step.
The tangential step computation involves a quadratic programming (QP) subproblem and dominates the total computational cost.
The QP subproblem is solved using a projected conjugate gradient (CG) method, where the CG operator is related to the Hessian of the Lagrangian for~\eqref{eq:subAL}, and the projections aim to place the iterates in the null space of the linearized constraints~\eqref{eq:subALcon}.
The key linear algebra kernel in all components of the composite-step SQP method, and therefore ALESQP, is the solution of an augmented system involving only the Jacobian of the equality constraints, and its transpose.
In each SQP stage, the augmented system is an indefinite symmetric two-by-two block system where the $(1,1)$ block is the identity, the $(1,2)$ block is the Jacobian of the equality constraint~\eqref{eq:subALcon}, and the $(2,2)$ block is zero.
Since we consider slack variables, we break the blocks into the $\bp$ variables and the $s$ variables, yielding the augmented system
\begin{align}
  \renewcommand{\arraystretch}{1.33}
  \begin{bmatrix}
    I & 0 & A_{\bp}^\mathsf{T} \\
  0 & I & A_{s}^\mathsf{T} \\
  A_{\bp} & A_{s} & 0
  \end{bmatrix}
  \begin{bmatrix}
  \bp \\
  s \\
  \lambda
  \end{bmatrix}
  &=
  \renewcommand{\arraystretch}{1.33}
  \begin{bmatrix}
  b_{\bp} \\
  b_s \\
  b_{\lambda} \\
  \end{bmatrix} ,
  \label{eq: linear system}
\end{align}
where $A_{\bp}$ corresponds to the Jacobian $v'_{\bp}(\bp)$ and $A_s$ corresponds to $-I$, the negative identity.
As $v(\bp)$ is defined by the shoelace formula, the entries of $A_{\bp}$ take the form $y_{i,k+1} - y_{i,k-1}$ and $x_{i,k-1} - x_{i,k+1}$.
Interestingly, the structure of $A_{\bp}$ appears similar to a discrete divergence on the mesh.
Similarly, $A_{\bp}^\mathsf{T}$ resembles a discrete gradient.
Taking this further, the Schur complement of the system matrix in~\eqref{eq: linear system} is
\[
  S = A_{\bp}A_{\bp}^\mathsf{T} + I ,
\]
where the $A_{\bp}A_{\bp}^\mathsf{T}$ matrix resembles a discrete Laplacian.
Therefore, the augmented system matrix is spectrally related to a perturbation of the identity by a discrete Laplacian, which is extremely well conditioned.
In other words, the augmented system is easily solved using Krylov methods like MINRES~\cite{paige1975solution} or GMRES~\cite{saad1986gmres}, without the need for preconditioning.
Our numerical results confirm this.
For problems with large domain lengthscales, where the term $A_{\bp}A_{\bp}^\mathsf{T}$ may dominate, multigrid methods are very effective in approximating the application of $S^{-1}$.

\subsection{Computing cell targets} \label{sec:targets}

To compute the cell targets $\mu$, we studied $k$-means and $k$-medoids~\cite{park2009simple} algorithms.
The latter proved to be the appropriate choice for our primary goal of lossless compression.
We recall that medoids are \emph{representative} objects of a data cluster,
which minimize the distance to all other data points in the cluster
using some choice of a distance metric.
To tailor the performance of $k$-medoids to our setting, we developed the approach shown in Algorithm~\ref{algo: clustering}.
We initialized our $k$-medoids clustering algorithm by drawing $k$ samples randomly from $N$ data points, without replacement,
the indices of which are denoted by the permutation $\sigma^N_k$.
After performing clustering on $K(\bp)$ and obtaining the cluster assignments $\psi$ and indices
of cluster medoids $C$, the shape targets are defined by the corresponding medoid for each data point,
$\mu_i = K_{C_{\psi_i}}(\bp)$, $i=1,2,\dots,N$.

\begin{algorithm}[htb!]
\caption{Computation of shape clusters} \label{algo: clustering}
\begin{algorithmic}[1]
\State \textbf{Input:}   Jacobians $J\in\R^{N\times8}$, number of clusters $k$.
\State \textbf{Initialization:} $\psi \leftarrow \vec{0}\in\N^N$, $\mbox{conv} \leftarrow \texttt{false}$, draw permutation $C \leftarrow \sigma^N_k \in \N^k$.
\While{not $\mbox{conv}$}
  \For{$i=1,2,\dots,N$} // Assign $\psi_i$ to closest cluster index based on $\| \cdot \|_{\ell_2}^2$.
    \State $d \leftarrow \vec{0}\in\R^k$
    \For{$j=1,2,\dots,k$}
      \State $d_j \leftarrow \|J(i,:) - J(C_j,:)\|_{\ell_2}^2$
    \EndFor
    \State $\psi_i \leftarrow s : d_s = \min(d)$
  \EndFor
  \State $C_{\rm{old}} \leftarrow C$
  \For{$j=1,2,\dots,k$} // Assign $C_j$ to global index for cluster medoid.
    \State $\mathcal{I} = \{ l : \psi_l = j \}; \ m = |\mathcal{I}|$
    \State $v = \vec{0} \in \N^m; \ l=1$
    \For{$i=1,2,\dots,N$} // Construct cluster-to-data lookup $\vec{v}$.
      \If{$\psi_i = j$}
        \State $v_l \leftarrow i; \ l \leftarrow l+1$
      \EndIf
    \EndFor
    \State $J_{\rm{med}} \leftarrow \mbox{median}(J(v,:))$ // Column-wise median over $m$ cluster members.
    \State $d \leftarrow \vec{0}\in\R^m$
    \For{$l=1,2,\dots,m$}
      \State $d_l \leftarrow \|J(v_l,:) - J_{\rm{med}}\|^2_{\ell_2}$
    \EndFor
    \State $C_j \leftarrow v_s : d_s = \min(d)$ // Cluster medoid is closest member to $J_{\rm{med}}$.
  \EndFor
  \If{${C}_{\rm{old}} = {C}$} // Converged if assignments are unchanged.
    \State $\mbox{conv} \leftarrow \texttt{true}$
  \EndIf
\EndWhile
\State \textbf{Output:} Indices of cluster medoids, ${C}\in\N^k$, and cluster assignments, ${\psi}\in\N^N$.
\end{algorithmic}
\end{algorithm}

\subsection{Computing cell weights}
Our proposed algorithm consists of two phases: a fast ``ranking'' phase to determine the best \emph{candidate} cells for lossless compression, and a slower ``bracketing'' phase to precisely determine the cells that are losslessly compressible.
In the ranking phase, where we use the relaxed objective function $L_r$, we compute cell-wise weights $\alpha \in \mathbb{R}^N$ based on the normalized squares of cluster sizes.
For cell $i$, let its cluster be cluster $j$, and let $\beta_j$ be the number of members in cluster~$j$.
Then the weight assigned to all cells in cluster $j$ is given by
\begin{align} \label{eq:weights}
  \alpha_i = \frac{\beta_j^2}{\sum_j \beta_j^2}, \quad i=1,2,\dots N.
\end{align}
This weighting scheme prioritizes large clusters while sacrificing smaller clusters.
This allows the optimization algorithm, which utilizes a least-squares difference, to focus on the greatest gains possible.

In the bracketing phase, we employ a bisection-like procedure based on the percentage of the cells for which we conjecture near-zero misfit.
We begin with a guess of 50\%, and if the obtained objective function is small, after several rounds of clustering, we increase the percentage to 75\%; if it is large, we decrease it to 25\%.
The process is then repeated.
This approach allows us to zoom in, within 1\% accuracy, on the number of losslessly compressible cells in only a handful of iterations, due to its logarithmic complexity.
Given a guess $\eta \in \{1,2,\dots N\}$ of the number of losslessly compressible cells, we compute the weights $\alpha \in \{0, 1\}^N$ by sorting the cell misfits $\| K_i(\bp) - \mu_i \|_{2}^2$, $i=1,2,\dots N$, in ascending order, and computing a cell index permutation, $\mathcal{E}$, corresponding to the sorted order.
Then, for $i=1,2,\dots N$,
\begin{align} \label{eq:binary}
  \alpha_i = 
  \begin{cases}
  1 \quad \text{if} \; \mathcal{E}(i) \le \eta, \\
  0 \quad \text{otherwise}.
  \end{cases}
\end{align}
The complete r-adaptive mesh compression algorithm is given in Algorithm~\ref{algo: radapt}.

\begin{algorithm}[tbh!]
\caption{r-adaptive mesh compression} \label{algo: radapt}
\begin{algorithmic}[1]
\State \textbf{Input:}   Mesh $\mesh$ with points $\hat{\bp}$; compression tolerance $\varepsilon$; ALESQP tolerance $\text{tol}$.
\State \textbf{Initialization:} Set ALESQP parameters as in~\cite[p.~258]{antil2023alesqp} and set ALESQP stopping tolerance to $\text{tol}$.
Set bracket $(b_{\text{bot}}, b_{\text{mid}}, b_{\text{top}}) \leftarrow (0, 50, 100)$. Set $b_{\text{pick}} \leftarrow b_{\text{mid}}$. Choose numbers of clusters, $k$, ranking iterations, $n_\text{rank}$, bracketing iterations, $n_\text{bracket}$, and clustering iterations, $n_\text{clust}$. Set $\bp \leftarrow \hat{\bp}$, $L_\text{prev} \leftarrow \infty$ and $\tau \leftarrow 10^{-8}$.
\For{$i=1,2,\dots,n_\text{rank}$} // Ranking phase to find candidates.
    \State Compute targets $\mu \in \mathbb{R}^{N\times8}$ using Algorithm~\ref{algo: clustering}.
    \State Compute weights $\alpha \in \mathbb{R}^{N}$ using formula~\eqref{eq:weights}.
    \State Solve problem~\eqref{eq:optslack} with the relaxed objective function $L_r$ using ALESQP.
    \If{$L_r(\bp) \le \max(\tau, {\varepsilon^2}/{N})$} // Objective tolerance met.
        \State \textbf{break}
    \EndIf
    \If{${|L_r(\bp)-L_\text{prev}|}/{|L_\text{prev}|} \le 10^{-3}$} // Stagnation detected.
        \State \textbf{break}
    \EndIf
    \State Set $L_\text{prev} \leftarrow L_r(\bp)$.
\EndFor

\If{$L_r(\bp) \le \max(\tau, {\varepsilon^2}/{N})$} // Ranking phase sufficient.
        \State Set $b_\text{pick} \leftarrow 99$ and skip bracketing phase.
\EndIf

\State Set $\bp_\text{init} = \bp$, $L_\text{rank} = L_r(\bp)$.

\For{$i=1,2,\dots,n_\text{bracket}$} // Bracketing phase.
  \State Set $\bp \leftarrow \bp_\text{init}$, $L_\text{prev} = L_\text{rank}$.
  \For{$i=1,2,\dots,n_\text{clust}$} // Clustering.
    \State Compute targets $\mu \in \mathbb{R}^{N\times8}$ using Algorithm~\ref{algo: clustering}.
    \State Compute weights $\alpha \in \{0,1\}^{N}$ using formula~\eqref{eq:binary} with $\eta = \lfloor b_\text{mid}/100 \cdot N\rfloor$.
    \State Solve problem~\eqref{eq:optslack} using ALESQP.
    \If{$L(\bp) \le \max(\tau, {\varepsilon^2}/{N})$} // Objective tolerance met.
    \State \textbf{break}
    \EndIf
    \If{${|L(\bp)-L_\text{prev}|}/{|L_\text{prev}|} \le 10^{-3}$} // Stagnation detected.
    \State \textbf{break}
    \EndIf
    \State Set $L_\text{prev} \leftarrow L(\bp)$.
  \EndFor
  \If{$L(\bp) \le \max(\tau, {\varepsilon^2}/{N})$} // Bracketing procedure.
    \If{$b_\text{mid} > b_\text{pick}$}
      $b_\text{pick} \leftarrow b_\text{mid}$
    \EndIf
    \State Set $b_\text{bot} \leftarrow b_\text{mid}$ and
           $b_\text{mid} \leftarrow \lfloor (b_\text{mid} + b_\text{top})/2 \rfloor$.
  \Else
    \State Set $b_\text{top} \leftarrow b_\text{mid}$ and
           $b_\text{mid} \leftarrow \lfloor (b_\text{mid} + b_\text{bot})/2 \rfloor$.
  \EndIf
\EndFor
\State Repeat clustering with $\eta \leftarrow \lfloor b_\text{pick}/100 \cdot N\rfloor$ and $\tau \leftarrow {\varepsilon^2}/{N}$. // Refine result.
\State \textbf{Output:} Mesh $\mathcal{T}_{h,r}$ with points $\bp$.
\end{algorithmic}
\end{algorithm}

\section{Numerical results}
\label{sec:Results}
We now present numerical examples demonstrating the effectiveness of Algorithm~\ref{algo: radapt}.
Each example features ``before'' and ``after'' mesh comparisons,
utilizing Algorithm~\ref{algo: mesh compression} to compute
the mesh compression ratios for a variety of $\varepsilon$ tolerances
within the interval $[10^{-8},1]$.
Misfit plots are included as well, showing how well each cell achieves
the target shapes specified by $\mu$.
Unless stated otherwise, the algorithm parameters are
$\varepsilon = 10^{-10}$, $\text{tol} = 10^{-12}$,
$n_\text{rank} = 6$, $n_\text{bracket} = 8$, and $n_\text{clust} = 200$.
The ALESQP parameters, other than $\text{tol}$, come
directly from~\cite[p.~258]{antil2023alesqp}.

\textbf{Example 1: Structured mesh recovery.}
We start with an example designed to test our algorithm's ability to recover a structured mesh that has been perturbed.
Let $T_h$ be a uniform mesh of square cells on the domain $[-0.5,0.5]^2$ with a random perturbation of strength $0.4\sqrt{N}$ applied to each non-boundary vertex.
The volume constraints are formulated using the global quantities in~\eqref{eq:globalbounds}, with $\gamma=0.4$.
We use the global constraint because it is not clear which $\gamma$ could be used with local constraints~\eqref{eq:localbounds}, as the perturbations are sizeable and such a constraint may prevent the recovery of the structured mesh, which is our verification goal.
We set the number of clusters to $k=2$.
The results are shown in Figure~\ref{fig: example square}.
We observe nearly perfect recovery of the structured mesh, with a compression ratio of almost 100\%.  We recall that due to the storage of at least one cell in the dictionary, the compression ratio can never be exactly 100\%.
The logarithmic plot of cell misfits in Figure~\ref{fig: example square}(f) confirms lossless compression for \emph{all} cells.
For this example, we additionally include a detailed study of the scalability of Algorithm~\ref{algo: radapt}, as we refine the mesh.
Here we use $\varepsilon = 10^{-6}$.
The iteration counts, shown in \ref{table: example square scaling}, demonstrate excellent algorithmic scaling with increasing mesh size.
As most of the computational cost is in the augmented system solves using GMRES, we expect the performance to scale linearly with the number of mesh cells, and to parallelize well, if needed.
We highlight the very low and nearly constant average numbers of GMRES iterations, at approximately three, which is consistent with our discussion in Section~\ref{sec:Algorithms} and the results of~\cite{murphy2000note}.
\begin{figure}[htb!]
  \begin{subfigure}[t]{0.28\textwidth}
    \centering
    \includegraphics[height = 0.9\linewidth]{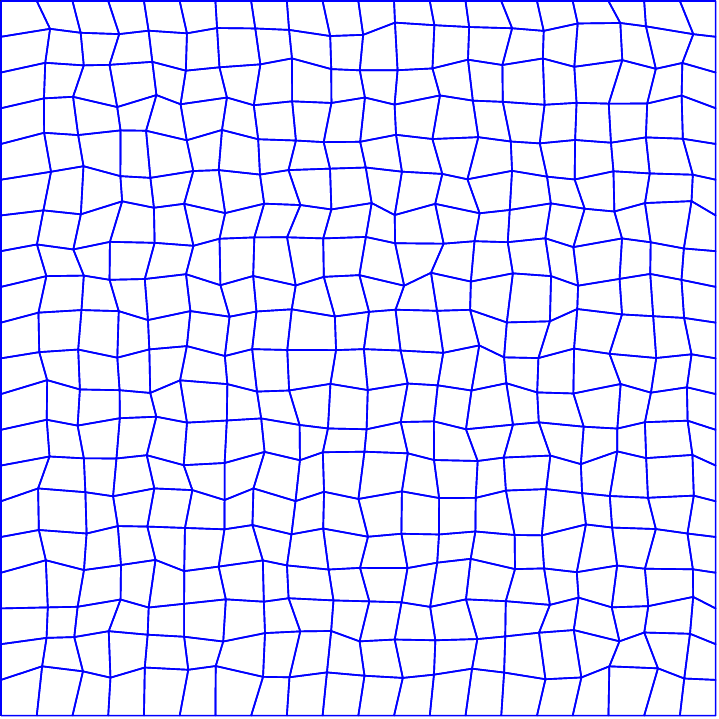}
    \caption{Mesh before.}
  \end{subfigure}
  \hfill
  \begin{subfigure}[t]{0.28\textwidth}
    \centering
    \includegraphics[height = 0.9\linewidth]{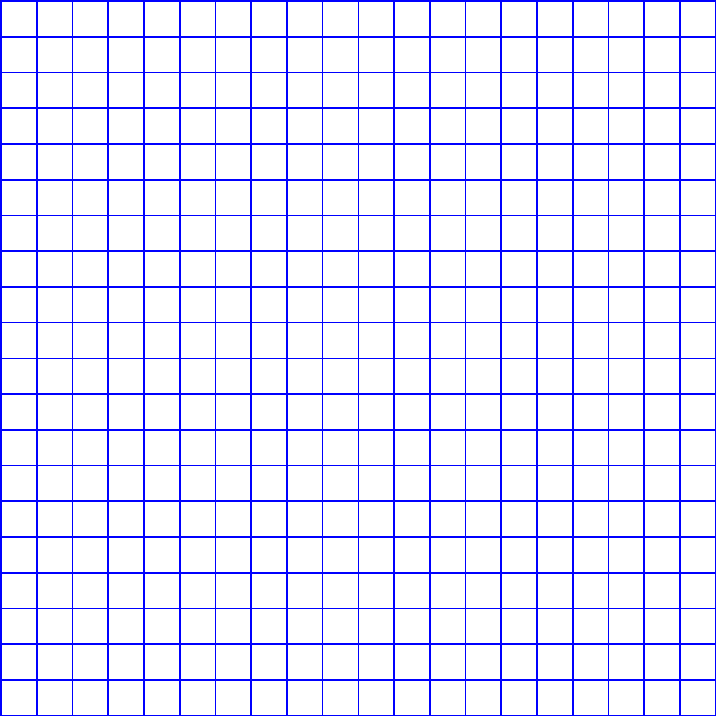}
    \caption{Mesh after.}
  \end{subfigure}
  \hfill
  \begin{subfigure}[t]{0.325\textwidth}
    \centering
    \includegraphics[height = 0.78\linewidth]{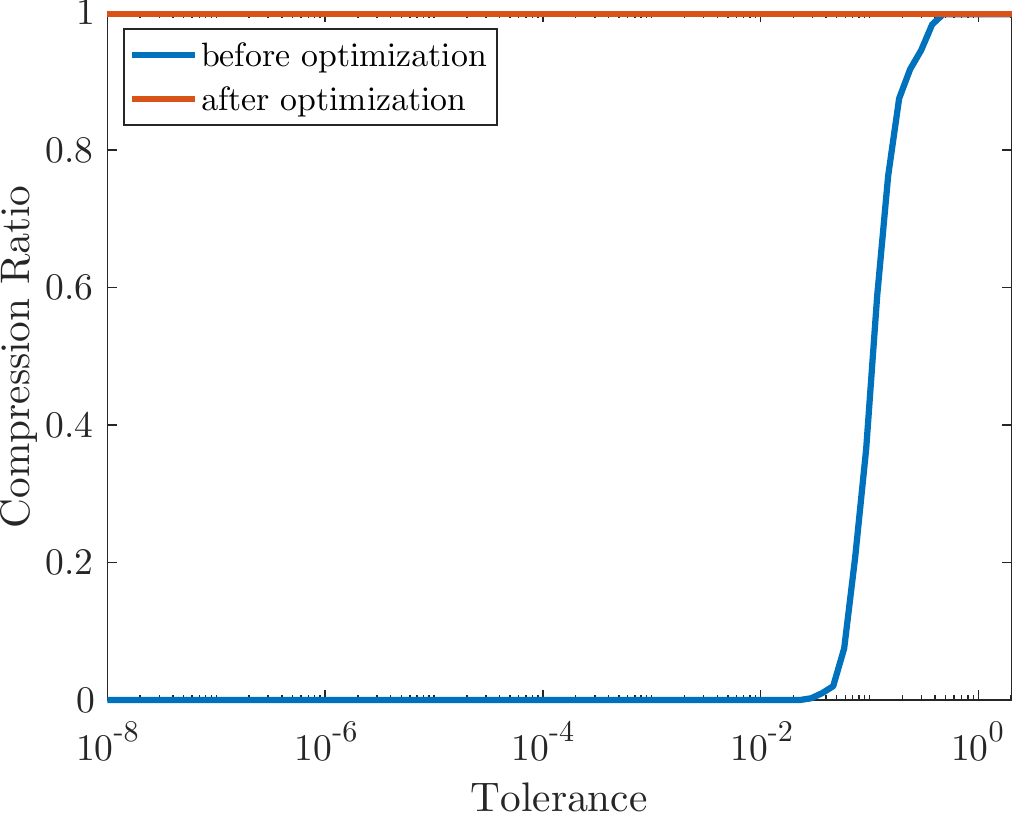}
    \caption{Mesh compression.}
  \end{subfigure}
  \vspace{2ex}
  \begin{subfigure}[t]{0.28\textwidth}
    \centering
    \includegraphics[height = 0.9\linewidth]{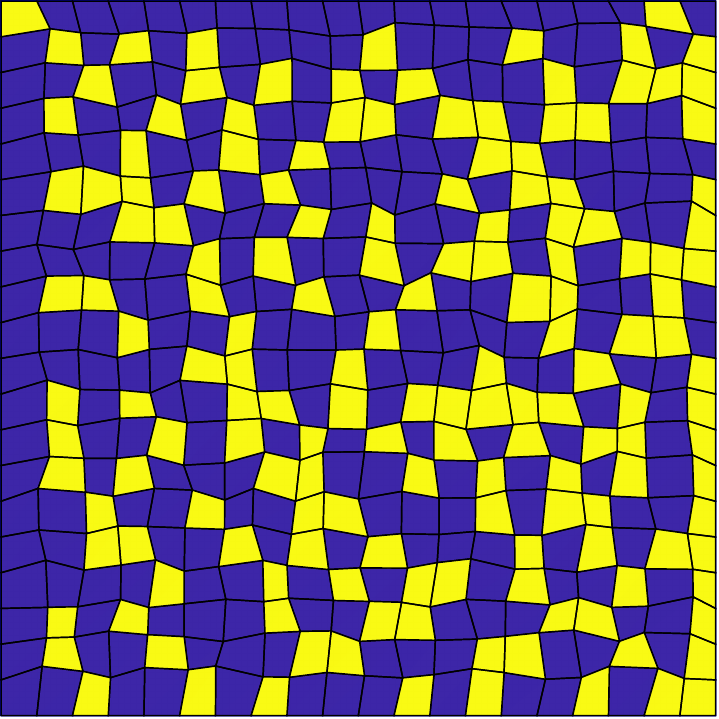}
    \caption{First cluster assignment.}
  \end{subfigure}
  \hfill
  \begin{subfigure}[t]{0.28\textwidth}
    \centering
    \includegraphics[height = 0.9\linewidth]{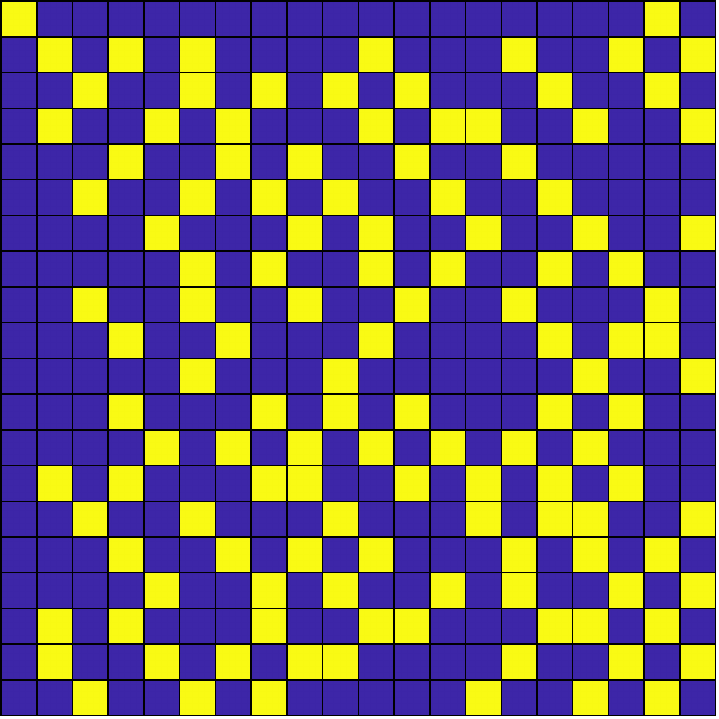}
    \caption{Last cluster assignment.}
  \end{subfigure}
  \hfill
  \begin{subfigure}[t]{0.305\textwidth}
    \centering
    \includegraphics[height = 0.82\linewidth]{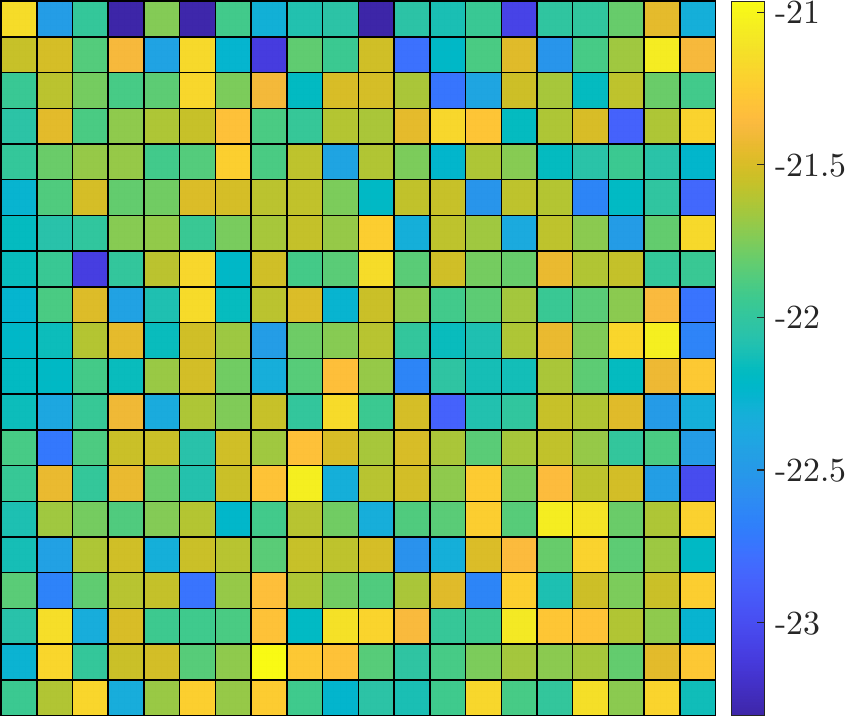}
    \caption{$\log_{10}$ misfit with respect to the cluster medoids.}
  \end{subfigure}  
  \caption{r-adaptive mesh optimization of a randomly perturbed structured $20\times20$ grid. A structured grid is recovered with $10^{-21}$ misfit, losslessly compressing to~99.75\%.}
  \label{fig: example square}
\end{figure}

\begin{table}
\centering
\begin{tabular}{cccccc}
\hline
Mesh & CLS & AL & SQP & CG & Avg.\ GMRES
\\
\hline
$20\times20$ & 7 & 11 & 12 & 223 & 4.51
\\
$40\times40$ & 38 & 71 & 52 & 944 & 3.91
\\
$80\times80$ & 29 & 52 & 39 & 726 & 3.03
\\
$160\times160$ & 28 & 49 & 38 & 851 & 2.87
\\
$320\times320$ & 30 & 54 & 43 & 1522 & 2.91
\\
\hline
\end{tabular}
\caption{Iteration counts for the randomly perturbed square mesh problem with increasing grid sizes.  Here CLS denotes the total number of clustering stages, AL denotes the total number of (outer) augmented Lagrangian iterations in ALESQP, SQP denotes the total number of (inner) SQP iterations, CG denotes the total number of projected CG iterations in the tangential-step QP subproblem, and Avg.\ GMRES denotes the average number of GMRES iterations per call to GMRES. With the exception of the $20\times20$ mesh, all iteration numbers remain in narrow ranges with increasing mesh size.  We note the low average numbers of GMRES iterations, around three, which, combined with other iteration counts, yield a scalable method.}
\label{table: example square scaling}
\end{table}

\textbf{Example 2: Square mesh with poor topology}
Our second example focuses on a mesh with poor topology,
with $10\times10$ boundary division and 99 quadrilaterals in the interior.
This mesh is taken as a two-dimensional slice from the front corner
of the three-dimensional extruded mesh in the previously shown
Figure~\ref{fig: compressible meshes}.
We use two clusters, $k=2$, and we formulate the constraints using the local bounds~\eqref{eq:localbounds}, with $\gamma=0.4$.
The results are shown in Figure~\ref{fig: example square 99}.
This challenging example features 68\% lossless compression, as shown in Figure~\ref{fig: example square 99}(c) and Figure~\ref{fig: example square 99}(f), with the matched cell misfits near $10^{-20}$, again.
A remarkable feature of the results for this example is that our algorithm identified two different clusters containing significant numbers of cells with near-zero misfit with respect to the cluster medoids, exhibiting as two distinct parallelograms. 

\begin{figure}[htb!]
  \begin{subfigure}[t]{0.28\textwidth}
    \centering
    \includegraphics[height = 0.9\linewidth]{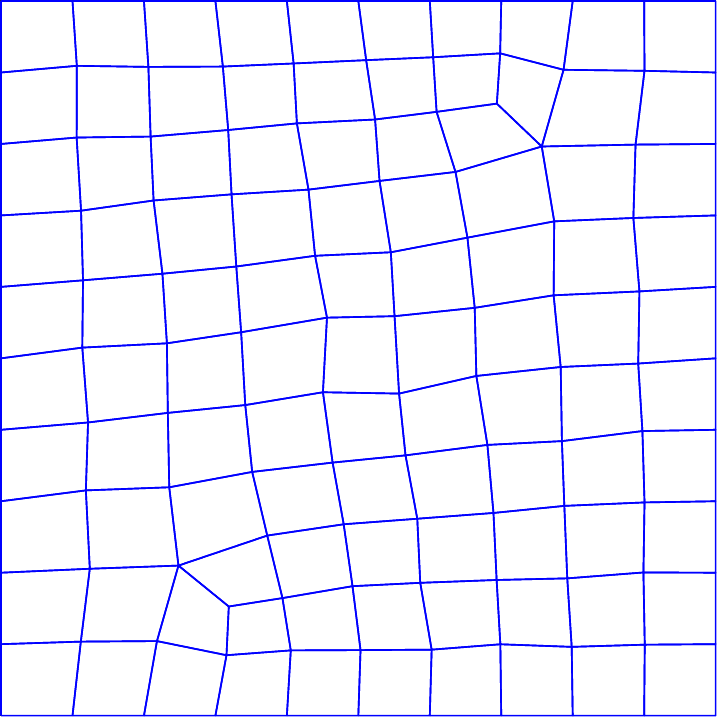}
    \caption{Mesh before.}
  \end{subfigure}
  \hfill
  \begin{subfigure}[t]{0.28\textwidth}
    \centering
    \includegraphics[height = 0.9\linewidth]{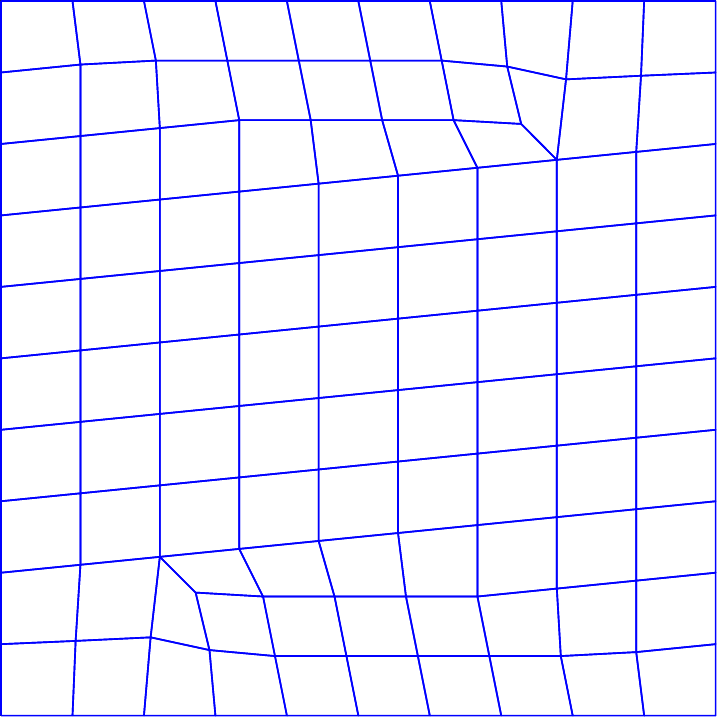}
    \caption{Mesh after.}
  \end{subfigure}
  \hfill
  \begin{subfigure}[t]{0.325\textwidth}
    \centering
    \includegraphics[height = 0.78\linewidth]{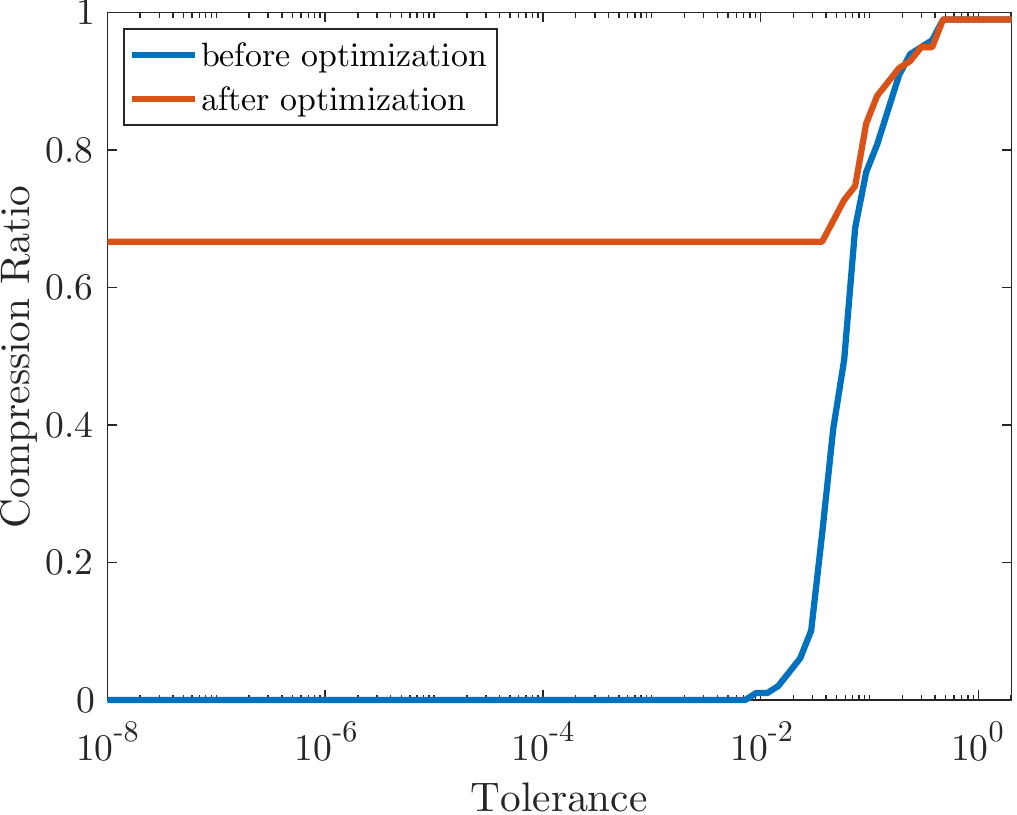}
    \caption{Mesh compression.}
  \end{subfigure}
  \vspace{2ex}
  \begin{subfigure}[t]{0.28\textwidth}
    \centering
    \includegraphics[height = 0.9\linewidth]{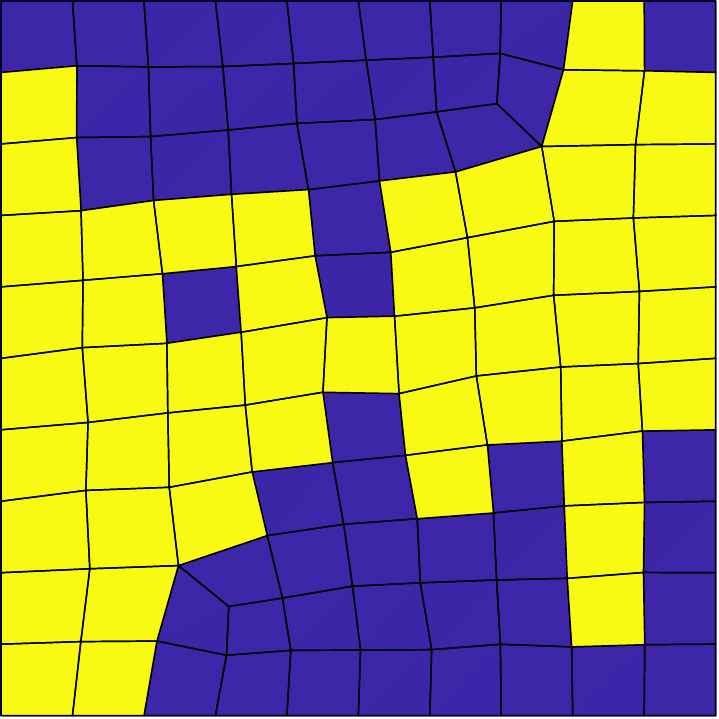}
    \caption{First cluster assignment.}
  \end{subfigure}
  \hfill
  \begin{subfigure}[t]{0.28\textwidth}
    \centering
    \includegraphics[height = 0.9\linewidth]{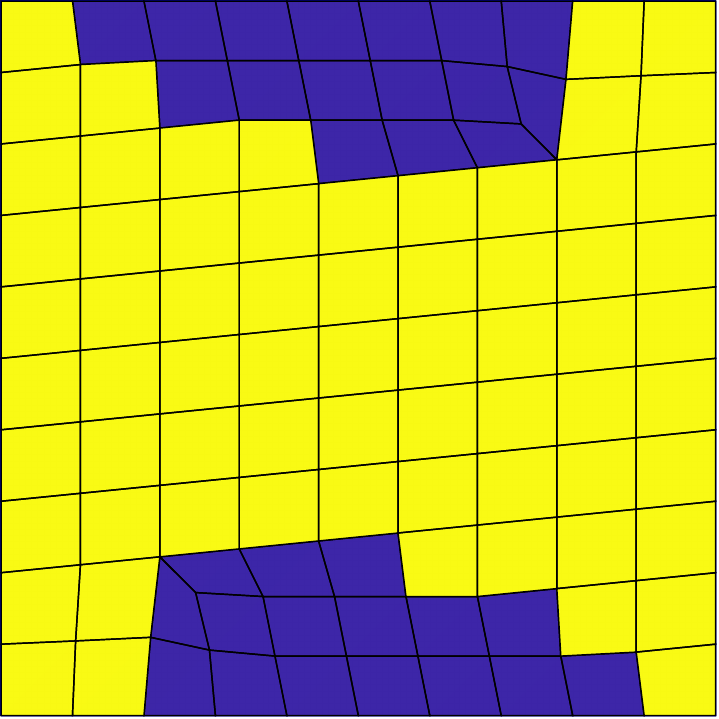}
    \caption{Last cluster assignment.}
  \end{subfigure}
  \hfill
  \begin{subfigure}[t]{0.305\textwidth}
    \centering
    \includegraphics[height = 0.82\linewidth]{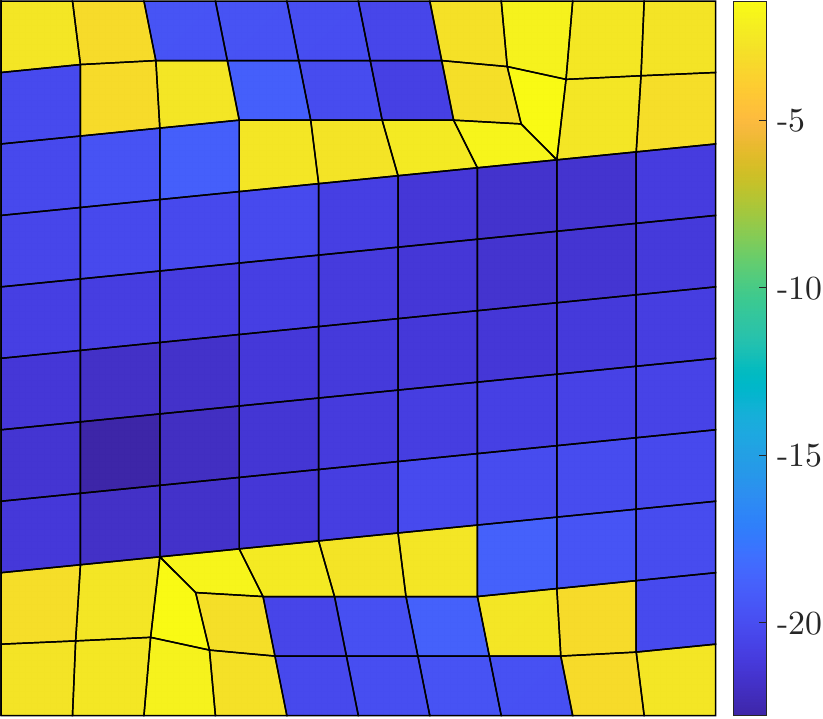}
    \caption{$\log_{10}$ misfit with respect to the cluster medoids.}
  \end{subfigure}  
  \caption{r-adaptive mesh optimization of a mesh with poor topology. Two regions of parallelograms are identified and matched to their clusters' (distinct) cluster medoids with a misfit of near $10^{-20}$, losslessly compressing the mesh to 68\%.}
  \label{fig: example square 99}
\end{figure}

\textbf{Example 3: Circle mesh optimization.}
An interesting question is how one would modify a mesh of the unit circle inside $[-1,1]^2$, which we introduced in Figure~\ref{fig: compression curve}, to increase its compressibility.
The mesh contains 896 cells and 933 vertices, of which 861 nodes are free.
Here we use four clusters, $k=4$, and we formulate the constraints using the local bounds~\eqref{eq:localbounds}, with $\gamma=0.4$.
The results presented in Figure~\ref{fig: example circle} indicate lossless compressibility in the amount of 61\%.
As before, the misfits of the losslessly matched cells are below $10^{-20}$.
While the volume constraints are satisfied to machine precision by the ALESQP algorithm, we note that there are some cell distortions near the corners of the compressible region.
If necessary, these distortions could be prevented by imposing additional constraints on the cell shapes, including, e.g., convexity constraints.
Another noteworthy feature, which contributes to the verification of Algorithm~\ref{algo: radapt}, is that there is at least one zero-misfit cell in each cluster.
This is consistent with the fact that we use a $k$-medoids algorithm to compute the cell targets $\mu$, as medoids are cluster representatives.

\begin{figure}[htb!]
  \begin{subfigure}[t]{0.28\textwidth}
    \centering
    \includegraphics[height = 0.9\linewidth]{figs/circle-before.pdf}
    \caption{Mesh before.}
  \end{subfigure}
  \hfill
  \begin{subfigure}[t]{0.28\textwidth}
    \centering
    \includegraphics[height = 0.9\linewidth]{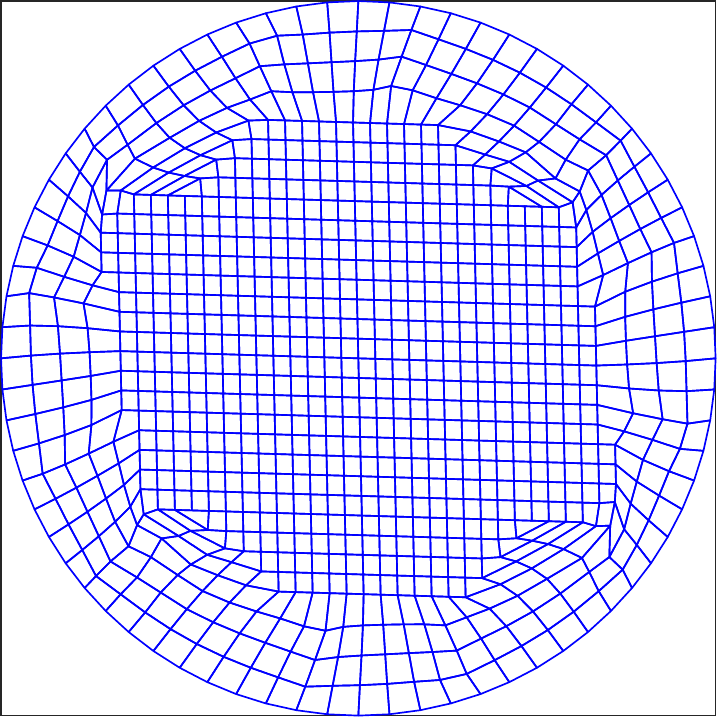}
    \caption{Mesh after.}
  \end{subfigure}
  \hfill
  \begin{subfigure}[t]{0.325\textwidth}
    \centering
    \includegraphics[height = 0.78\linewidth]{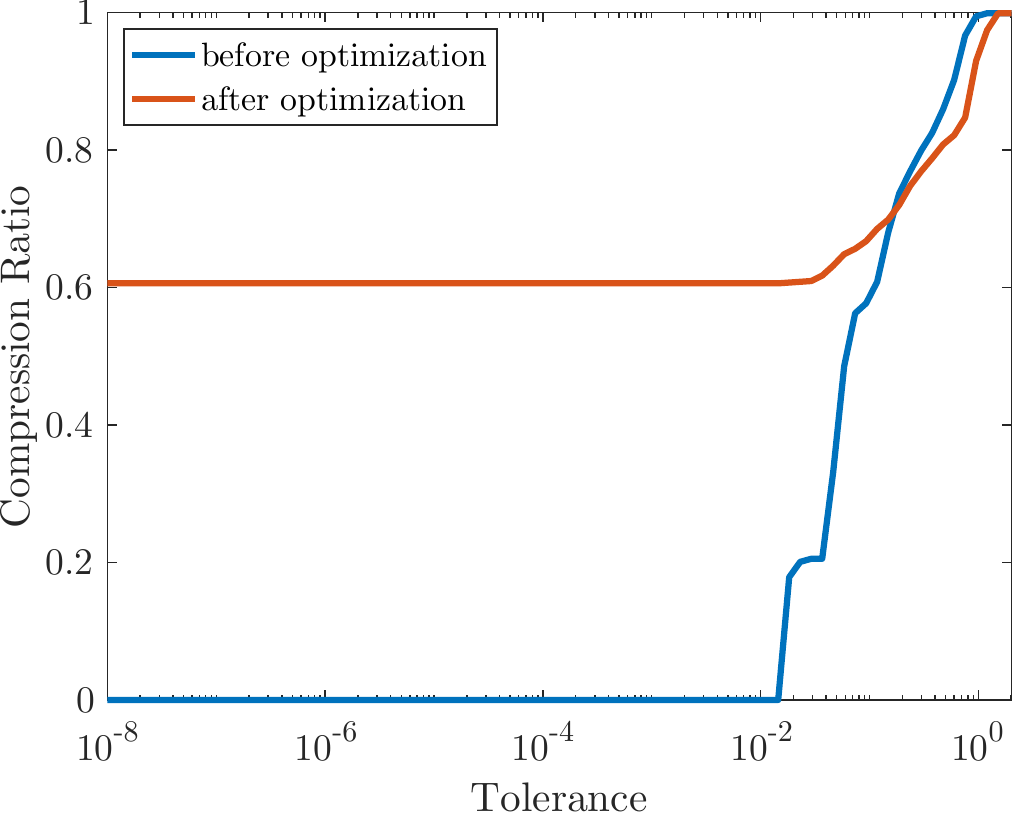}
    \caption{Mesh compression.}
  \end{subfigure}
  \vspace{2ex}
  \begin{subfigure}[t]{0.28\textwidth}
    \centering
    \includegraphics[height = 0.9\linewidth]{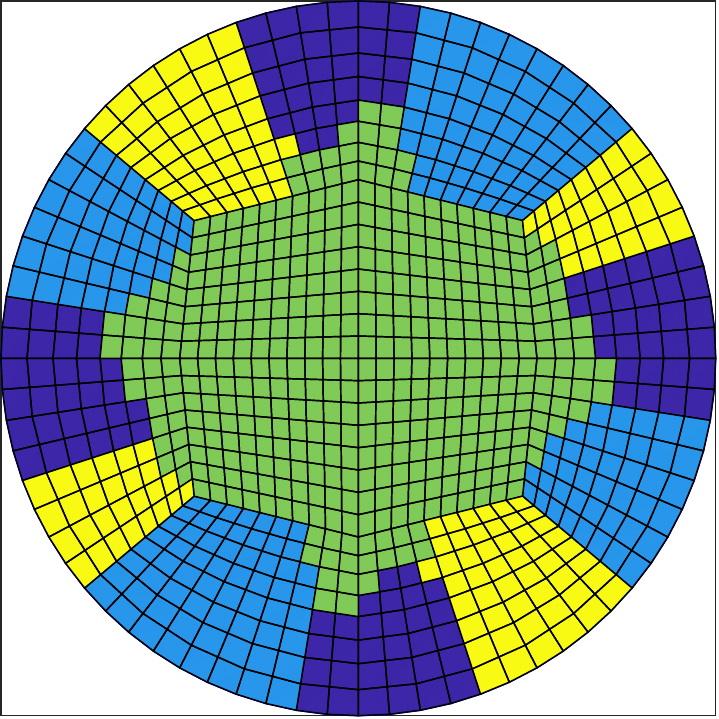}
    \caption{First cluster assignment.}
  \end{subfigure}
  \hfill
  \begin{subfigure}[t]{0.28\textwidth}
    \centering
    \includegraphics[height = 0.9\linewidth]{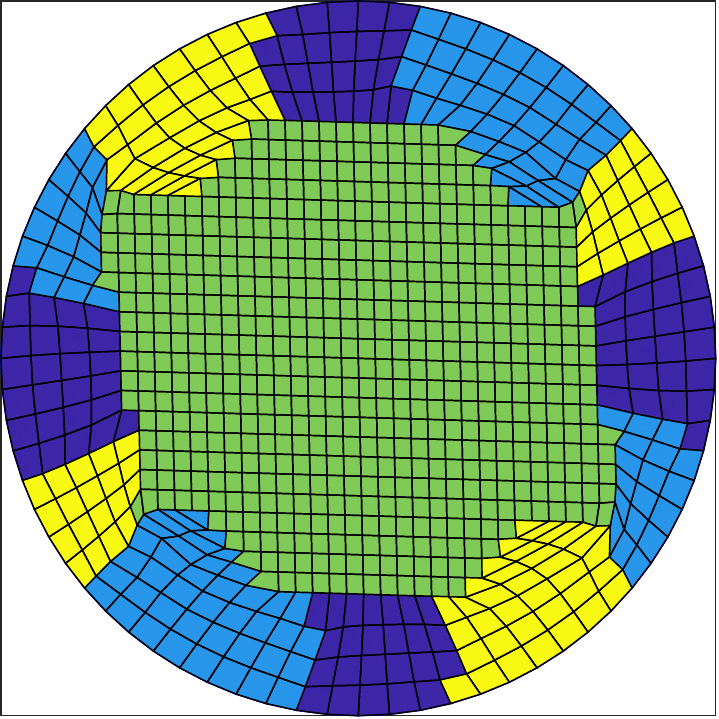}
    \caption{Last cluster assignment.}
  \end{subfigure}
  \hfill
  \begin{subfigure}[t]{0.305\textwidth}
    \centering
    \includegraphics[height = 0.82\linewidth]{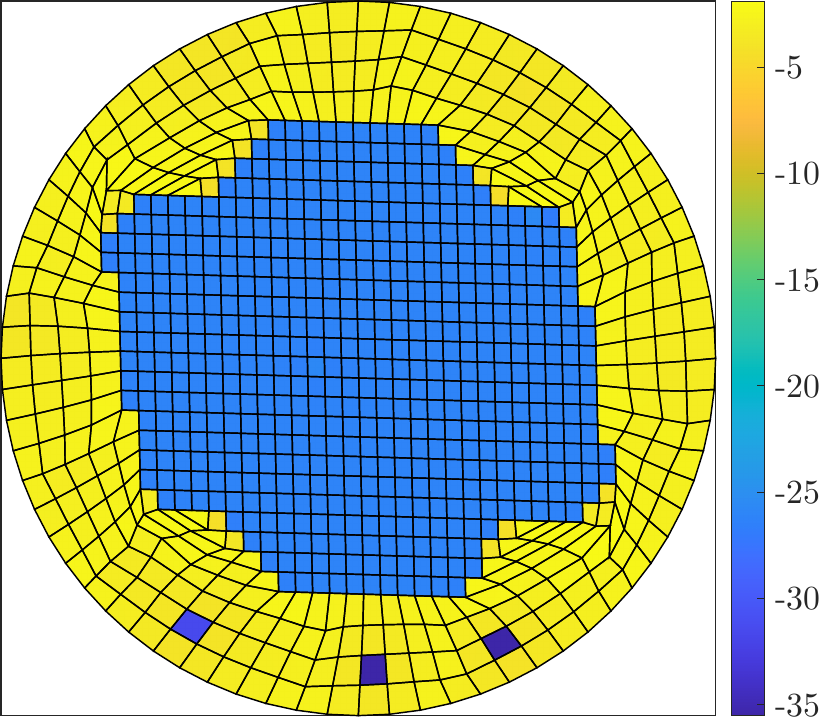}
    \caption{$\log_{10}$ misfit with respect to the cluster medoids.}
  \end{subfigure}  
  \caption{r-adaptive mesh optimization of a circle mesh. The center region is identified as the candidate for near-zero cell misfit and matched to its cluster medoid with a misfit of $10^{-20}$, losslessly compressing the mesh to~61\%.}
  \label{fig: example circle}
\end{figure}
\section{Conclusion}
\label{sec:Conclusion}
We have demonstrated two complementary approaches to reducing the memory burden of large-scale finite element simulations.
Our fist contribution is a dictionary-based compression scheme for finite element meshes containing redundant structure.
Owing to the dictionary, we have shown massive reductions in memory use in the \mrhyde{} finite element simulator.
Our second contribution is an r-adaptive mesh optimization algorithm that combines the recently developed ALESQP method, $k$-medoids clustering, and bracketing, to enhance redundancy in meshes that do not naturally possess it.
Our optimization results demonstrate significant \emph{lossless} compression for a variety of challenging meshes, while maintaining cell shape quality through volume inequality constraints.
Future research directions include extensions to other types of constraints, such as convexity constraints, and algorithmic improvements.
In the large-scale setting, with meshes containing millions or billions of finite elements, the developed algorithms are meant to be applied to subdomains resulting from typical spatial decomposition, rather than the full mesh.
As excellent compression can be shown even for relatively small meshes, an interesting research direction is to further decompose these subdomains into chunks amenable to GPU acceleration and to specialize our dictionary-based compression and optimization algorithms to this setting.

\bibliographystyle{siam}
\bibliography{fem_compression.bib}

\end{document}